\newtheorem{theorem}{Theorem}[section]
\newtheorem{lemma}[theorem]{Lemma}
\newtheorem{corollary}[theorem]{Corollary}
\title{\bf \Large }
\date{\today} 
\title{\bf \Large  A result for hemi-bundled  cross-intersecting families\footnote{This paper was published on 
Advances in Applied Mathematics 169 (2025) 102912. 
E-mail addresses: \url{wuyjmath@163.com} (Y. Wu), \url{fenglh@163.com} (L. Feng), 
\url{ytli0921@hnu.edu.cn} (Y. Li)}}
\author{
{\small  Yongjiang Wu, \ \ Lihua Feng, \ \ Yongtao Li\footnote{Corresponding author} }\\[2mm]
\small School of Mathematics and Statistics, HNP-LAMA, Central South University\\
 \small Changsha, Hunan, 410083, China}
\begin{document}
\maketitle
\begin{abstract}
Two families $\mathcal{F}$ and $\mathcal{G}$ are called 
cross-intersecting if for every $F\in \mathcal{F}$ and $G\in \mathcal{G}$, the intersection $F\cap G$ is non-empty. 
It is significant to determine the maximum sum of sizes of cross-intersecting families 
under the additional assumption that one of the two families is intersecting. Such a pair of families is said to be hemi-bundled. 
In particular, Frankl (2016) proved that for $k \geq 1, t\ge 0$ and $n \geq 2 k+t$, 
 if $\mathcal{F} \subseteq\binom{[n]}{k+t}$ and $\mathcal{G} \subseteq\binom{[n]}{k}$ are cross-intersecting families, in which  $\mathcal{F}$ is non-empty and $(t+1)$-intersecting, then
$|\mathcal{F}|+|\mathcal{G}| \leq\binom{n}{k}-\binom{n-k-t}{k}+1$. 
This bound is achieved when $\mathcal{F}$ consists of a single set. 
In this paper, we generalize this result under the constraint  $|\mathcal{F}| \geq r$ for every $r\leq n-k-t+1$.  
Moreover, we investigate the stability results of Katona's theorem for non-uniform families with the $s$-union property. Our result extends the stabilities established by Frankl (2017) and Li and Wu (2024). 
As applications, we revisit a recent result of Frankl and Wang (2024) as well as a result of Kupavskii (2018). Furthermore, we determine the extremal families in these two results. 
\end{abstract} 

{\bf AMS Classification}:  05C65; 05D05  

{\bf Key words}:  Intersecting families; Cross-intersecting families; Stability

\section{Introduction}

For integers $a\leq b$, let  $[a, b]=\{a, a+1, \ldots, b\}$ and let $[n]=[1,n]$ for short. Let $2^{[n]}$ denote the power set of $[n]$. For $0 \leq k \leq n$, let $\binom{[n]}{k}$ denote the collection of all $k$-element subsets of $[n]$. A family $\mathcal{F} \subseteq 2^{[n]}$ is called $k$-\textit{uniform} if $|F| = k$ 
for every $F\in \mathcal{F}$.  Two families  $\mathcal{F}, \mathcal{G}\subseteq 2^{[n]}$ are  said to be \textit{isomorphic} if there exists a permutation $\sigma$ on $[n]$ such that $\mathcal{G}=\left\{\{\sigma(x):x\in F\}: F\in \mathcal{F}\right\}$, and we write $\mathcal{F}\cong\mathcal{G}$.  
A family $\mathcal{F} \subseteq 2^{[n]}$ is called $t$-\textit{intersecting} if $|F\cap F^{\prime}|\geq t$
 for all $F, F^{\prime}\in \mathcal{F}$. If $t=1$, then $\mathcal{F}$ is simply called \textit{intersecting}.

 Erd\H{o}s, Ko and Rado \cite{E61} determined the maximum $k$-uniform
intersecting family by proving that if $k\ge 2$, $n\ge 2k$ and 
 $\mathcal{F} \subseteq\binom{[n]}{k}$ is an intersecting family, then 
\begin{equation}
    \label{eq-EKR}
    |\mathcal{F}|\leq\binom{n-1}{k-1}.  
\end{equation}
For $n > 2 k$, the equality holds if and only if $\mathcal{F}=\big\{F\in \binom{[n]}{k}: x\in F\big\}$ for some $x \in [n]$. Such a family is
called a \textit{full star}.
There are many different proofs and methods for the Erd\H{o}s--Ko--Rado theorem; see, e.g., the Katona cycle method \cite{Kat1972},  the probabilistic method \cite{AS2016}, the algebraic methods \cite{Fur2006,HZ2017,Lov1979} and other combinatorial methods \cite{Day1974b,FF2012,H18,K64,KZ2018}.

\subsection{Hemi-bundled cross-intersecting families} 

Two families  $\mathcal{F},\mathcal{G}\subseteq 2^{[n]}$ are  called \textit{cross-intersecting} if $|F\cap G|\geq 1$
 for any $F\in \mathcal{F}$ and $G\in \mathcal{G}$. In 1967, Hilton and Milner \cite{H67} proved that if $n \geq 2 k$ and  $\mathcal{F},\mathcal{G} \subseteq\binom{[n]}{k}$ are non-empty cross-intersecting families. Then
\begin{equation}
    \label{eq-HM}
    |\mathcal{F}|+|\mathcal{G}| \leq\binom{n}{k}-\binom{n-k}{k}+1. 
\end{equation}
This result initiated the study to find the maximum of the sum of sizes of cross-intersecting families.  
In 1992, Frankl and Tokushige \cite{FT92} established the following extension: Let $ 2\le \ell \le k $ and $n\geq k+\ell$. If $\mathcal{F} \subseteq\binom{[n]}{k}$ and $\mathcal{G} \subseteq\binom{[n]}{\ell}$ are non-empty cross-intersecting families, then 
\begin{equation}
    \label{eq-FT}
     |\mathcal{F}|+|\mathcal{G}| \leq\binom{n}{k}-\binom{n-\ell}{k}+1. 
\end{equation} 
This bound can be attained {  when $|\mathcal{G}|=1$}. 
{  A family $\mathcal{F}$ is called  \textit{trivial} if
it is contained in a full star, i.e., all sets of $\mathcal{F}$ contain a common element; otherwise it is called \textit{non-trivial}}.
Frankl and Tokushige \cite{FT1998} (see  Frankl \cite{Fra2024}) proved that 
for $2\le \ell \le k$ and $n\ge k+\ell $,  
suppose that $\mathcal{F}\subseteq {[n] \choose k}$ 
and $\mathcal{G} \subseteq {[n] \choose \ell }$ are non-empty cross-intersecting. If $\mathcal{F}$ and $\mathcal{G}$ 
are non-trivial, then 
\[  |\mathcal{F}|+|\mathcal{G}| 
\le  {n \choose k} - 2{n-\ell \choose k} + {n-2\ell \choose k} +2.  \] 
This bound can be achieved when $\mathcal{G}$ consists of two disjoint sets. In the case $\ell < k$, 
this bound is also valid under a weaker condition, that is, $\mathcal{F}$ is non-empty and $\mathcal{G}$ is non-trivial; see Frankl and Wang \cite{FW2024-cover3}. 
Apart from the well-known results above, 
there are a large number of generalizations and extensions of Hilton--Milner's result (\ref{eq-HM}) in the literature; see, e.g.,  \cite{BF2022,CLLW2022,F16,FK2017-CPC,FW2024,HP2025-jcta,SFQ2022,WZ2013}.

In this paper, we study the problem of finding the maximum sum of sizes of cross-intersecting families {\it under a certain condition that one of them is intersecting}. 
Such a pair of cross-intersecting families is usually called {\it hemi-bundled}.  
It is worth mentioning that some extremal problems on hemi-bundled families are  investigated in the literature; see 
Scott and Wilmer \cite{SW2021} and Yu et al. \cite{YKXZG2022} for variants of the Bollob\'{a}s two families theorem. 
Particularly, the problem on hemi-bundled cross-intersecting families was initially studied by Frankl \cite{F16}.

\begin{theorem}[Frankl \cite{F16}] 
\label{F16}
 Let $k \geq 2,t\ge 0$ and $n \geq 2 k+t$ be integers.  Let $\mathcal{F} \subseteq\binom{[n]}{k+t}$ and $\mathcal{G} \subseteq\binom{[n]}{k}$ be  cross-intersecting families. If $\mathcal{F}$ is non-empty and 
 $(t+1)$-intersecting, then
$$
|\mathcal{F}|+|\mathcal{G}| \leq\binom{n}{k}-\binom{n-k-t}{k}+1.
$$
For $n > 2 k+t$, the equality holds if and only if $\mathcal{F}=\left\{F_1\right\}$ for some $F_1\in\binom{[n]}{k+t}$ and $\mathcal{G}=\left\{G \in\binom{[n]}{k}: G \cap F_1 \neq \emptyset \right\}$, or $k=2$, there is one more possibility, namely, 
$\mathcal{F}=\{F\in {[n] \choose t+2}: [t+1] \subseteq F\}$ and $\mathcal{G}=\left\{G \in\binom{[n]}{k}: G \cap[t+1] \neq \emptyset\right\}$ under isomorphism.
\end{theorem}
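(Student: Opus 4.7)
The plan is to reduce to shifted families, recast the theorem as a lower bound on the union of ``forbidden'' $k$-subsets, and establish that bound by an inductive argument exploiting the $(t+1)$-intersecting hypothesis.

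First, I would apply the standard $(i,j)$-compression operator for every $1 \le i < j \le n$ to both $\mathcal{F}$ and $\mathcal{G}$. A routine check shows these operations preserve both cardinalities, the cross-intersecting condition, and the $(t+1)$-intersecting property of $\mathcal{F}$, so we may assume both families are shifted. The cross-intersecting condition then forces $\mathcal{G} \cap \binom{[n]\setminus F}{k} = \emptyset$ for every $F \in \mathcal{F}$, whence
\[
|\mathcal{G}| \;\le\; \binom{n}{k} \;-\; \Bigl|\bigcup_{F \in \mathcal{F}} \binom{[n]\setminus F}{k}\Bigr|.
\]
Thus the claim is equivalent to the key inequality
\[
\Bigl|\bigcup_{F \in \mathcal{F}} \binom{[n]\setminus F}{k}\Bigr| \;\ge\; \binom{n-k-t}{k} + |\mathcal{F}| - 1.
\]

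To establish the key inequality I would proceed by induction on $m := |\mathcal{F}|$. The base case $m=1$ gives equality and is immediate. For the inductive step, the $(t+1)$-intersecting hypothesis enters crucially: when a new $F$ is processed, every other member $F'$ of $\mathcal{F}$ satisfies $|F' \setminus F| \le (k+t)-(t+1) = k-1$, while the slack $n-k-t \ge k$ provided by $n \ge 2k+t$ leaves just enough room inside $[n]\setminus F$ to pick a fresh $k$-subset there meeting every set $F' \setminus F$ coming from a previously processed member. In the generic case such a $k$-subset is new to the running union and accounts for the increment of one in the target bound.

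The main obstacle will be the degenerate configuration in which the greedy ``one new subset per step'' count fails — this occurs precisely in the sunflower-like family $\mathcal{F} = \{F : [t+1] \subseteq F\}$, which actually achieves equality when $k=2$. To deal with it, I would choose the order in which members of $\mathcal{F}$ are processed guided by the shifted structure, so that the new contributions amortise correctly across the whole induction; alternatively one can treat this case separately by a direct inclusion--exclusion count. Tracking the equality conditions throughout then gives the characterisation: equality for $|\mathcal{F}|=1$ yields the first extremal configuration, while equality for $|\mathcal{F}|\ge 2$ forces $k=2$ together with $\mathcal{F}$ isomorphic to $\{F : [t+1] \subseteq F\}$, yielding the second extremal configuration in the statement.
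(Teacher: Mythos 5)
Your reduction is sound: after shifting, the largest $\mathcal{G}$ compatible with $\mathcal{F}$ is exactly $\binom{[n]}{k}\setminus\bigcup_{F\in\mathcal{F}}\binom{[n]\setminus F}{k}$, so the theorem is equivalent to the lower bound $\bigl|\bigcup_{F\in\mathcal{F}}\binom{[n]\setminus F}{k}\bigr|\ge\binom{n-k-t}{k}+|\mathcal{F}|-1$, and the base case $|\mathcal{F}|=1$ is immediate. The gap is in the inductive step. Your ``one new forbidden set per newly processed member'' requires, when $F$ is processed, a $k$-subset of $[n]\setminus F$ meeting every $F'\setminus F$ for the previously processed $F'$; equivalently, the system $\{F'\setminus F\}$ must admit a transversal of size at most $k$. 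The $(t+1)$-intersecting hypothesis only guarantees that each $F'\setminus F$ is nonempty of size at most $k-1$; it does not prevent these sets from being pairwise disjoint, and once more than $k$ of them are pairwise disjoint no $k$-set hits them all. This failure is not confined to the sunflower you single out. Take $t=0$, $k=3$, $\mathcal{F}=\{\{1,2,j\}:3\le j\le 7\}$, a shifted intersecting family that is far from extremal: whichever member is processed last, the four link sets are the pairwise disjoint singletons $\{j\}$, $j\ne j_0$, so that step contributes \emph{zero} new sets under your accounting, for every processing order. The key inequality still holds there (earlier steps contribute many more than one new set each), but that surplus is precisely the amortisation you defer to, and making it precise is the entire content of the proof; ``choose the order guided by shiftedness'' cannot rescue the count, since the example is already shifted and symmetric. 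The equality analysis, including the extra $k=2$ configuration, would require the same missing quantitative control.

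For comparison: this paper does not reprove Theorem \ref{F16} (it is quoted from Frankl and used as an ingredient in the proof of Theorem \ref{main1}); the technique the paper does use for the generalization is induction on $n$ after shifting, splitting into $\mathcal{F}(\bar n),\mathcal{G}(\bar n)$ and $\mathcal{F}(n),\mathcal{G}(n)$ and using Lemmas \ref{S1}--\ref{S4} to verify that each pair inherits the hypotheses on $[n-1]$. That induction, or alternatively a shadow-type estimate in the spirit of Lemma \ref{FM} applied to $\mathcal{D}_k(\mathcal{F})$, is a route that actually closes. As written, your induction on $|\mathcal{F}|$ does not.
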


Recently, the third author and Wu \cite[Theorem 15]{L24} sharpened Theorem \ref{F16} in the case $t=1$ during the study on {  the stabilities of Katona's theorem (see Theorem \ref{thm-K64})}.
Subsequently, Wu \cite{W23} showed the following extension under the constraint $|\mathcal{F}|\ge 2$.

\begin{theorem}[Wu \cite{W23}] 
\label{W23}
 Let $k \geq 3,t\ge 0$ and $n \geq 2 k+t$ be positive integers. Let $\mathcal{F} \subseteq\binom{[n]}{k+t}$ and $\mathcal{G} \subseteq\binom{[n]}{k}$ be cross-intersecting families. If $\mathcal{F}$ is $(t+1)$-intersecting and $|\mathcal{F}| \geq 2$, then
$$
|\mathcal{F}|+|\mathcal{G}| \leq\binom{n}{k}-\binom{n-k-t}{k}-\binom{n-k-t-1}{k-1}+2.
$$
For $n > 2 k+t$, the equality holds if and only if $\mathcal{F}=\left\{F_1, F_2\right\}$ for some $F_1, F_2 \in\binom{[n]}{k+t}$ with $\left|F_1 \cap F_2\right|=k+t-1$ and $\mathcal{G}=\left\{G \in\binom{[n]}{k}: G \cap F_1 \neq \emptyset \text{ and } G \cap F_2 \neq \emptyset\right\}$, or two more possibilities when $k=3$, namely, $\mathcal{F}=\{F\in {[n] \choose t+3}: [t+2]\subseteq F\}$ and $ \mathcal{G}=\left\{G \in\binom{[n]}{3}: G \cap[t+2] \neq \emptyset\right\}$, or $\mathcal{F}=\left\{F\in\binom{[n]}{t+3}: [t+1] \subseteq F\right\}$ and $ \mathcal{G}=\left\{G \in\binom{[n]}{3}: G \cap[t+1] \neq \emptyset\right\}$ under isomorphism.
\end{theorem}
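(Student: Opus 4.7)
The plan is to prove Theorem~\ref{W23} in three stages: a shifting reduction, an inclusion-exclusion bound based on any two sets of $\mathcal{F}$, and a loss argument ruling out $|\mathcal{F}|\ge 3$ except in the small-$k$ exceptional configurations. As a first step I would apply the standard left-shift operators $S_{ij}$ ($1\le i<j\le n$) to both $\mathcal{F}$ and $\mathcal{G}$ simultaneously; these preserve $|\mathcal{F}|$, $|\mathcal{G}|$, the cross-intersecting property, and the $(t+1)$-intersecting property, so we may assume both families are shifted. All the candidate extremal families listed in the theorem are shifted up to isomorphism, so no information is lost and the equality characterization can be recovered after unshifting.

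Assuming $|\mathcal{F}|\ge 2$, pick distinct $F_1,F_2\in\mathcal{F}$ and set $i=|F_1\cap F_2|$, so $t+1\le i\le k+t-1$. Every $G\in\mathcal{G}$ must meet both $F_1$ and $F_2$, hence inclusion-exclusion gives
\[
|\mathcal{G}|\le \binom{n}{k}-2\binom{n-k-t}{k}+\binom{n-2(k+t)+i}{k}.
\]
The right-hand side is strictly increasing in $i$ and is therefore maximized at $i=k+t-1$. A single application of Pascal's identity, $\binom{n-k-t}{k}=\binom{n-k-t-1}{k}+\binom{n-k-t-1}{k-1}$, rewrites this maximum as $\binom{n}{k}-\binom{n-k-t}{k}-\binom{n-k-t-1}{k-1}$. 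Adding $|\mathcal{F}|=2$ yields the stated bound, with equality forcing $|F_1\cap F_2|=k+t-1$ and $\mathcal{G}=\bigl\{G\in\binom{[n]}{k}: G\cap F_1\ne\emptyset\text{ and }G\cap F_2\ne\emptyset\bigr\}$, which is the principal extremal configuration.

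The crux is to rule out $|\mathcal{F}|\ge 3$: each additional set in $\mathcal{F}$ must cost strictly more than one in $|\mathcal{G}|$. My approach is to pick a third $F_3\in\mathcal{F}$ and exploit the shifted structure to assume $F_3\setminus(F_1\cup F_2)\ne\emptyset$, so $|F_1\cup F_2\cup F_3|\ge|F_1\cup F_2|+1$. Refining the inclusion-exclusion over the three miss-sets extracts an additional negative contribution to the upper bound on $|\mathcal{G}|$ of order $\binom{n-k-t-1}{k-1}$, which for $k\ge 3$ and $n\ge 2k+t$ dwarfs $|\mathcal{F}|-2$ once $|\mathcal{F}|$ itself is controlled by a standard bound on shifted $(t+1)$-intersecting families. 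The exceptional extremal families for $k=3$ arise precisely when this loss is marginal, namely when every $F\in\mathcal{F}$ shares a common kernel of size $t+1$ or $t+2$; a direct count in $\binom{[n]}{t+3}$ verifies that $\mathcal{F}=\{F:[t+2]\subseteq F\}$ and $\mathcal{F}=\{F:[t+1]\subseteq F\}$ together with their maximal cross-intersecting companions do saturate the bound. I expect the main obstacle to be making this last stage fully rigorous: the three-set inclusion-exclusion mixes positive and negative correction terms of comparable magnitude, and tracking their overall sign requires a delicate case analysis on the pairwise intersection sizes $|F_i\cap F_j|\in[t+1,k+t-1]$, possibly preceded by a secondary canonical reduction such as $[t+1]\subseteq F$ for all $F\in\mathcal{F}$.
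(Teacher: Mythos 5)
Your first two stages are sound: after shifting, the two-set inclusion--exclusion bound
\[
|\mathcal{G}|\le \binom{n}{k}-2\binom{n-k-t}{k}+\binom{n-2(k+t)+i}{k},\qquad i=|F_1\cap F_2|,
\]
is correct, is maximized at $i=k+t-1$, and via Pascal's identity gives the stated bound together with the principal extremal configuration when $|\mathcal{F}|=2$. The problem is that this only proves the theorem under the extra hypothesis $|\mathcal{F}|=2$, and your third stage, which must handle every family with $|\mathcal{F}|\ge 3$, is not a proof but a heuristic whose central quantitative claim is false. You assert that the additional negative contribution from a third set, of order $\binom{n-k-t-1}{k-1}$, ``dwarfs $|\mathcal{F}|-2$.'' But a shifted $(t+1)$-intersecting family in $\binom{[n]}{k+t}$ can be as large as $\binom{n-t-1}{k-1}$ (the full star through a fixed $(t+1)$-set), which is of the \emph{same} order $\Theta(n^{k-1})$ as your claimed loss; and for $k=3$ the theorem's own exceptional families attain equality with $|\mathcal{F}|=\binom{n-t-1}{2}$ or $n-t-2$, so no argument of the form ``each extra set costs more than one'' can be uniformly correct. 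A three-set refinement of inclusion--exclusion cannot close this gap: to bound $|\mathcal{F}|+|\mathcal{G}|$ when $|\mathcal{F}|$ is large you must account for the constraint imposed on $\mathcal{G}$ by \emph{every} member of $\mathcal{F}$ simultaneously, not just three of them, and you must simultaneously track $|\mathcal{F}|$ itself.

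For comparison, the paper does not prove this statement by direct inclusion--exclusion at all: it is the case $r=2$ of Theorem~\ref{main1}, which is proved by induction on $n$. After shifting, one splits into $\mathcal{F}(\bar n),\mathcal{G}(\bar n)$ (handled by the inductive hypothesis, with Lemma~\ref{S2} guaranteeing $|\mathcal{F}(\bar n)|\ge r$) and $\mathcal{F}(n),\mathcal{G}(n)$ (handled by Frankl's Theorem~\ref{F16}, whose equality case is exactly what produces the $k=3$ exceptional families); when $\mathcal{F}(n)=\emptyset$ the nested families $\mathcal{G}_0,\dots,\mathcal{G}_{r-1}$ extract the required loss from $\mathcal{G}(n)$ using $r$ distinct members of $\mathcal{F}$. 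That inductive machinery is precisely what replaces your missing stage. Separately, your remark that the equality characterization ``can be recovered after unshifting'' also needs an argument (shifting is not injective on isomorphism classes); the paper devotes a paragraph to reversing $s_{i,j}$ on the extremal configurations. As it stands, your proposal establishes the theorem only for $|\mathcal{F}|=2$.
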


Motivated by Theorems \ref{F16} and \ref{W23}, we investigate the general case for a family $\mathcal{F}$ with  $|\mathcal{F}| \geq r$. 
The first main result of this paper extends these results as follows. 

\begin{theorem}[Main result] \label{main1}
 Let $k \geq 2, t\ge 0$, $r\geq 1$ and $n \geq 2 k+t$ be integers.  Let $\mathcal{F} \subseteq\binom{[n]}{k+t}$ and $\mathcal{G} \subseteq\binom{[n]}{k}$ be  cross-intersecting families, where  $\mathcal{F}$ is $(t+1)$-intersecting and $|\mathcal{F}| \ge r$.  \\ 
{\rm (i)}
If $ r \leq k-1$, then
$$
|\mathcal{F}|+|\mathcal{G}| \leq\binom{n}{k}-\binom{n-k-t+1}{k}+\binom{n-k-t-r+1}{k-r}+r.
$$
For $n > 2 k+t$, the equality holds if and only if $\mathcal{F}=\left\{F_1,\ldots,F_r \right\}$ with 
$F_i:=[k+t-1]\cup \{k+t-1+i\}$ for each $i\in [r]$  
and $\mathcal{G}=\left\{G \in\binom{[n]}{k}: G \cap F_i \neq\emptyset\text{~for each~} i\in [r]\right\}$, or when $k=r+1$, $\mathcal{F}=\Big\{[t+r] \cup\{i\}: i \in\{t+r+1, \ldots, n\} \Big\}$ and $\mathcal{G}=\left\{G \in\binom{[n]}{r+1}: G \cap[t+r] \neq \emptyset\right\}$, or when $k=3$, $\mathcal{F}=\left\{F\in\binom{[n]}{t+3}: [t+1] \subseteq F\right\}$ and $\mathcal{G}=\left\{G \in\binom{[n]}{3}: G \cap[t+1] \neq \emptyset\right\}$ under isomorphism. \\ 
{\rm (ii)}
If $k\leq r \leq n-k-t+1$, then
$$
|\mathcal{F}|+|\mathcal{G}| \leq\binom{n}{k}-\binom{n-k-t+1}{k}+n-k-t+1.
$$
For $n > 2 k+t$, the equality holds if and only if $\mathcal{F}=\Big\{[k+t-1] \cup\{i\}: i \in\{k+t, \ldots, n\} \Big\}$ and $\mathcal{G}=\left\{G \in\binom{[n]}{k}: G \cap[k+t-1] \neq \emptyset\right\}$, or when $k=3$, $\mathcal{F}=\left\{F\in\binom{[n]}{t+3}: [t+1] \subseteq F\right\}$ and $\mathcal{G}=\left\{G \in\binom{[n]}{3}: G \cap[t+1] \neq \emptyset\right\}$ under isomorphism. 
\end{theorem}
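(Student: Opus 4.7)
\textbf{Proof Plan}: My plan is to combine the shifting technique with a structural dichotomy on $\mathcal{F}$ and a size-restricted cross-intersecting inequality. First, I would apply the left-shifting operations $S_{ij}$ ($1\le i<j\le n$) jointly to the pair $(\mathcal{F},\mathcal{G})$ until both families are shifted. It is standard that simultaneous shifting preserves the two sizes, the cross-intersecting property, and the $(t+1)$-intersecting condition on $\mathcal{F}$, so we may assume henceforth that $\mathcal{F}$ and $\mathcal{G}$ are both shifted.

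Second, I would split into two cases based on whether $[t+1]\subseteq F$ holds for every $F\in\mathcal{F}$. In the principal case, set $\widetilde{\mathcal{F}}=\{F\setminus[t+1]:F\in\mathcal{F}\}\subseteq\binom{[t+2,n]}{k-1}$ and $\widetilde{\mathcal{G}}=\{G\in\mathcal{G}: G\cap[t+1]=\emptyset\}\subseteq\binom{[t+2,n]}{k}$. Then $(\widetilde{\mathcal{F}},\widetilde{\mathcal{G}})$ is a cross-intersecting pair in the ground set $[t+2,n]$ (of size $n-t-1\ge 2k-1$) with $|\widetilde{\mathcal{F}}|\ge r$. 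A size-restricted Frankl--Tokushige-type cross-intersecting bound (established along the way by shifting and a direct combinatorial estimate, whose extremal configurations will correspond to those listed in the statement) controls $|\widetilde{\mathcal{F}}|+|\widetilde{\mathcal{G}}|$; combining this with the trivial estimate $|\{G\in\mathcal{G}: G\cap[t+1]\neq\emptyset\}|\le\binom{n}{k}-\binom{n-t-1}{k}$ and simplifying through standard binomial identities yields the desired sum. In the complementary case, where some $F\in\mathcal{F}$ satisfies $[t+1]\not\subseteq F$, I would exploit shiftedness together with the $(t+1)$-intersecting property to restrict $\mathcal{F}$ so severely that a direct counting argument produces a strictly smaller sum.

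Third, to remove the a-priori assumption $|\mathcal{F}|=r$ and handle the constraint $|\mathcal{F}|\ge r$, I would verify (via a short binomial identity calculation) that the right-hand side of (i) is non-increasing in $r$ throughout $1\le r\le k-1$, with equality precisely when $n=2k+t$. Hence, if $|\mathcal{F}|=r'>r$, applying the proven case for $r'$ gives $|\mathcal{F}|+|\mathcal{G}|$ bounded by the right-hand side for $r'$, which in turn is at most the right-hand side for $r$. Part (ii) follows from the same monotonicity applied at the boundary $r=k-1$, together with the direct observation that any shifted extremal $\mathcal{F}$ containing the common core $[k+t-1]$ must coincide with $\{[k+t-1]\cup\{x\}: x\in[k+t,n]\}$.

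The main obstacle will be the complete equality analysis, namely isolating all non-isomorphic extremal configurations. The exceptional families occurring when $k=r+1$ or $k=3$ require careful bookkeeping of equality through the shifting step and through the size-restricted cross-intersecting inequality, since multiple distinct extrema coexist there (in particular, extremal $\mathcal{F}$ can have size much larger than $r$, as in $\{[t+r]\cup\{i\}:i\in[t+r+1,n]\}$ when $k=r+1$). A secondary difficulty is the structural study in the complementary case of the dichotomy, which demands fine properties of shifted $(t+1)$-intersecting families in order to rule out large families not wholly contained in the star through $[t+1]$.
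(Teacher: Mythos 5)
Your proposal has a genuine gap at its core. After the dichotomy, the principal case (every $F\supseteq[t+1]$) reduces the theorem to a bound on $|\widetilde{\mathcal{F}}|+|\widetilde{\mathcal{G}}|$ for a cross-intersecting pair $\widetilde{\mathcal{F}}\subseteq\binom{[t+2,n]}{k-1}$, $\widetilde{\mathcal{G}}\subseteq\binom{[t+2,n]}{k}$ with $|\widetilde{\mathcal{F}}|\ge r$ but with \emph{no} intersecting hypothesis left on $\widetilde{\mathcal{F}}$. For $r=1$ this is the Frankl--Tokushige inequality (\ref{eq-FT}), but for general $r$ it is precisely a size-sensitive cross-intersecting inequality of Frankl--Wang type (the two-uniformity analogue of Theorem \ref{F24}), which is as hard as the theorem you are trying to prove; in this paper that result is in fact \emph{deduced from} Theorem \ref{main1}, so invoking it as something "established along the way by shifting and a direct combinatorial estimate" begs the question. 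The complementary case is also not actually handled: you assert that shiftedness plus the $(t+1)$-intersecting property forces a strictly smaller sum, but no mechanism is given, and ruling out large shifted $(t+1)$-intersecting families not contained in the star of $[t+1]$ (e.g.\ families concentrated on $[t+3]$, which produce the exceptional extremal configurations at $k=3$) requires a real argument. The monotonicity-in-$r$ reduction in your third step is fine but peripheral.

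The paper's route is different and self-contained: it inducts on $n$ (base case $n=2k+t$ by a complementation argument), shifts both families, and decomposes as $\mathcal{F}=\mathcal{F}(n)\cup\mathcal{F}(\bar n)$, $\mathcal{G}=\mathcal{G}(n)\cup\mathcal{G}(\bar n)$. Lemma \ref{S2} guarantees $|\mathcal{F}(\bar n)|$ stays large enough to apply the induction hypothesis to $(\mathcal{F}(\bar n),\mathcal{G}(\bar n))$, while the link at $n$ is controlled either by a direct count of $(k-1)$-sets missing chosen members of $\mathcal{F}(\bar n)$ (when $\mathcal{F}(n)=\emptyset$) or by Frankl's Theorem \ref{F16} applied to the cross-intersecting, $(t+1)$-intersecting pair $(\mathcal{F}(n),\mathcal{G}(n))$ (Lemmas \ref{S1} and \ref{S4}). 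If you want to salvage your decomposition, you would need to supply an independent proof of the size-restricted cross-intersecting inequality for the projected pair, including its equality cases, and a genuine treatment of the non-star case.
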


We shall show that our result 
on hemi-bundled cross-intersecting families 
is extremely useful for the study of extremal set problems on cross-intersecting families and  
 intersecting families with small diversity. 
 We defer the detailed discussions to Subsection \ref{sec-1.3}.  
In next section, with the help of Theorem \ref{main1}, 
we investigate the families with some certain union properties.

\subsection{More stability results on  Katona's theorem}  

\label{sec-Stability-Katona}

A family $\mathcal{F} \subseteq 2^{[n]}$ is called $s$-\textit{union} if $|F\cup F^{\prime}|\leq s$ for all $F, F^{\prime}\in \mathcal{F}$.
Obviously, $\mathcal{F}$ is $t$-intersecting if and only if the family $\mathcal{F}^c=\{ [n]\backslash F: F\in \mathcal{F}\}$ is {  $(n-t)$-union}. Since $|\mathcal{F}^c|=|\mathcal{F}|$, the extremal problems for
$t$-intersecting families and $s$-union families can be translated each other.
In 1964, Katona \cite{K64} determined the maximum size of an $s$-union family. 

\begin{theorem}[Katona \cite{K64}] 
\label{thm-K64}
Let $2 \leq s\leq n-2$ and 
$\mathcal{F} \subseteq 2^{[n]}$ be $s$-union. \\
{\rm (i)} If $s=2 d$, then $$|\mathcal{F}| \leq \sum_{0 \leq i \leq d}\binom{n}{i},$$ 
with equality if and only if
$\mathcal{F}=\left\{F\subseteq [n]: |F|\leq d\right\}$. \\ 
{\rm (ii)} If $s=2 d+1$, then 
\[ |\mathcal{F}| \leq \sum_{0 \leq i \leq d}\binom{n}{i}+\binom{n-1}{d}, \] 
with equality if and only if  $\mathcal{F}=\left\{F\subseteq [n]: |F|\leq d\right\} \cup\left\{F \in\binom{[n]}{d+1}: y \in F\right\}$ for some $y \in[n]$.    
\end{theorem}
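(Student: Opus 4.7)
My plan combines complementation with a compression and averaging argument. The first step is a reformulation: setting $t := n - s$ and $\mathcal{G} := \{[n] \setminus F : F \in \mathcal{F}\}$, the $s$-union property of $\mathcal{F}$ becomes the $t$-intersecting property of $\mathcal{G}$ (and $|\mathcal{G}| = |\mathcal{F}|$). Under this duality the claimed extremal families dualize to $\{G \subseteq [n] : |G| \ge n - d\}$ in the even case $s = 2d$, and to $\{G : |G| \ge n - d\} \cup \{G \in \binom{[n]}{n - d - 1} : y \notin G\}$ in the odd case $s = 2d + 1$. So it suffices to prove the bound for $t$-intersecting families, which is the classical formulation.

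Next, I would apply the standard left-shift operators $S_{ij}$ for $1 \le i < j \le n$ to $\mathcal{G}$: replace each $G \in \mathcal{G}$ with $j \in G$ and $i \notin G$ by $(G \setminus \{j\}) \cup \{i\}$ whenever the latter is not already in $\mathcal{G}$. A routine check shows each $S_{ij}$ preserves both $|\mathcal{G}|$ and the $t$-intersecting property, so I may assume $\mathcal{G}$ is left-shifted. For the key estimate I would average over all $n!$ cyclic orderings of $[n]$: for each ordering $\pi$, the shiftedness of $\mathcal{G}$ together with the pairwise $t$-intersecting condition bounds how many sets of $\mathcal{G}$ can be ``prefix-maximal'' in $\pi$, and double-counting then yields $\sum_{i \le d} \binom{n}{i}$ in the even case and $\sum_{i \le d} \binom{n}{i} + \binom{n-1}{d}$ in the odd case.

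For the extremal characterisation, I would trace through the equality cases of the averaging bound. In the even case, equality forces $\mathcal{G}$ to contain exactly every set of size $\ge n - d$, which complements back to $\mathcal{F} = \{F : |F| \le d\}$. In the odd case, equality first forces all sets of size $\ge n - d$ to appear plus exactly $\binom{n-1}{d}$ extra sets of size $n - d - 1$; a shifted/structural analysis then shows these extra sets must all avoid a common element $y \in [n]$, complementing back to the claimed extremal $\mathcal{F}$.

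The main obstacle is the odd case, where the extremal family breaks full $S_n$-symmetry to only $S_{n-1}$. Plain averaging over $S_n$ risks being too symmetric to detect this asymmetric extremum, so the counting must be refined (for instance by distinguishing orderings by their initial element) and supplemented by a careful shifted-structure argument to recover the distinguished point $y$ in the equality case.
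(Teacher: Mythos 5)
The paper does not actually prove this statement: it is quoted from Katona \cite{K64}, and the only machinery the paper exhibits in its direction is the layer-wise inequality $|\mathcal{F}_i|+|\mathcal{F}_{s+1-i}|\le\binom{n}{i}$ (Lemma 5.1, likewise cited), which, summed over $i\le d$ and combined with the Erd\H{o}s--Ko--Rado bound for the intersecting middle layer $\mathcal{F}_{d+1}$ when $s=2d+1$, yields both bounds and their equality cases layer by layer. So your proposal must be judged on its own terms, and it has a genuine gap. Your reduction is fine: $t=n-s$, complementation turns $s$-union into $t$-intersecting, the extremal families dualize as you say, and $S_{ij}$ preserves size and the $t$-intersecting property. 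The problem is that the entire theorem is concentrated in the one sentence you do not carry out: ``the shiftedness of $\mathcal{G}$ together with the pairwise $t$-intersecting condition bounds how many sets of $\mathcal{G}$ can be prefix-maximal in $\pi$, and double-counting then yields'' the stated totals. You never define ``prefix-maximal'' nor state the per-ordering bound, and the natural instantiation fails: the sets of $\mathcal{G}$ occurring as initial segments of a fixed ordering form a chain, any two members of which intersect in the smaller one, so the $t$-intersecting hypothesis only forces all such prefixes to have length at least $t$; this gives at most $n-t+1$ prefixes per ordering, and double-counting then produces the LYM-type inequality $\sum_{g}|\mathcal{G}_g|\binom{n}{g}^{-1}\le n-t+1$. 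That inequality is far weaker than Katona's bound --- it does not control how $|\mathcal{G}|$ is distributed over the layers and is consistent with $|\mathcal{G}|$ of order $(n-t)\binom{n}{\lfloor n/2\rfloor}$. The permutation/circle method is tailored to uniform intersecting families; no per-ordering lemma delivering $\sum_{0\le i\le d}\binom{n}{i}$ for a non-uniform $t$-intersecting family is supplied, and shiftedness alone does not produce one.

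The equality characterisation inherits the same gap: you propose to ``trace through the equality cases of the averaging bound,'' but since the bound itself is not established there is nothing to trace, and you explicitly concede that in the odd case the averaging is too symmetric and must be ``refined'' and ``supplemented'' in unspecified ways. A workable elementary route is the one the paper implicitly relies on: prove the pairing inequality $|\mathcal{F}_i|+|\mathcal{F}_{s+1-i}|\le\binom{n}{i}$ for each $1\le i\le s/2$ (via the cross-intersecting/shadow argument of Frankl's 2017 stability paper, using that $\mathcal{F}_i$ and $\mathcal{F}_{s+1-i}$ are cross-intersecting and $\mathcal{F}_{s+1-i}$ is $(s+2-2i)$-intersecting), note $|\mathcal{F}_0|\le 1$, apply Erd\H{o}s--Ko--Rado to $\mathcal{F}_{d+1}$ in the odd case, and read off equality from the equality conditions of each layer. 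I would recommend rebuilding the proof on that decomposition rather than on a global averaging over orderings.
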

Katona's bound in (ii) implies Erd\H{o}s--Ko--Rado's bound in (\ref{eq-EKR}). 
The extremal families in Theorem \ref{thm-K64} are known as the Katona families. 
In 2017, Frankl \cite{Fra2017} provided a better bound and determined the sub-optimal $s$-union family by excluding all subfamilies of Katona's families. Recently, the third author and Wu \cite{L24} further determined the third-optimal $s$-union family. 

In this paper, 
we present a sharpening of the {  Katona's theorem} under some additional conditions. For notational convenience, let us first define the extremal families. 
Let $r\geq 1, s\geq 4$ and $n \geq s+2$ be integers. 
Let $D_i=[d]\cup \{d+i\}$ for each $i\in [1,n-d]$. Note that $D_i\cap D_j=[d]$ for $i\neq j$.
For $s=2d$, we define the following families: 
\begin{itemize}
\item 
For $ r \leq d-1$,  
let 
$\mathcal{W}_r(n, 2 d)={[n] \choose \le d-1} \cup\{D_1,\ldots, D_r\} \cup\left\{F \in\binom{[n]}{d}: F \cap D_i \neq \emptyset ~\text{for}~ i\in [r] \right\}$. 

\item 
For $d-1\leq r \leq n-d$, let 
$\mathcal{W}^*(n, 2 d)= {[n] \choose \le d-1} \cup\{ D_1,\ldots ,D_{n-d}\} \cup\left\{F \in\binom{[n]}{d}: F \cap [d] \neq \emptyset\right\}.$

\item 
For $s=6$, let 
$ { \mathcal{W}^{\sharp}(n, 6)}={[n] \choose \le 2}\cup\left\{F\in\binom{[n]}{4}: [2] \subseteq F\right\} \cup\left\{F \in\binom{[n]}{3}: F \cap [2] \neq \emptyset\right\}.$ 
\end{itemize} 
For $s=2d+1$,  we need to introduce the following families first. 
Let $k\geq 3$, $1\leq r\leq k-2$ and $n > 2 k$  be integers. For every $j\in [1,n-k]$, we denote $I_{k,j}:=[2, k]\cup \{k+j\}$. We define  
\begin{equation}
\mathcal{J}_{k,r} :=\{I_{k,1},\ldots, I_{k,r}\}\cup \left\{F \in\binom{[n]}{k}: 1\in F, F \cap I_{k,j}\neq \emptyset, j\in [1,r]\right\} \label{eq-Jkr} 
\end{equation}
and 
\begin{equation}
\mathcal{H}_{k} :=\{I_{k,1},\ldots, I_{k,n-k}\}\cup \left\{F \in\binom{[n]}{k}: 1\in F, F \cap [2,k]\neq \emptyset\right\}. \label{eq-Hk}
\end{equation} 
In addition, let us put
\begin{equation}
    \label{eq-G4}
    \mathcal{G}_{4}:=\left\{F\in\binom{[n]}{4}: 1\notin F, [2,3] \subseteq F\right\}\cup \left\{F \in\binom{[n]}{4}: 1\in F, F\cap[2,3] \neq \emptyset\right\}. 
\end{equation} 
Under the notation in 
(\ref{eq-Jkr}), (\ref{eq-Hk}) and (\ref{eq-G4}), 
we define the following: 
\begin{itemize}
\item 
For $r \leq d-1$, we denote
$\mathcal{W}_r(n, 2 d+1)=\left\{F\subseteq [n]: |F|\leq d\right\}\cup\mathcal{J}_{d+1,r}.
$

\item 
For $d-1\leq r \leq n-d-1$, we denote
$\mathcal{W}^{*}(n, 2 d+1)=\left\{F\subseteq [n]: |F|\leq d\right\}\cup\mathcal{H}_{d+1}.$ 

\item 
For $s=7$, we denote
$ { \mathcal{W}^{\sharp}(n, 7)}=\left\{F\subseteq [n]: |F|\leq 3\right\}\cup\mathcal{G}_4.$
\end{itemize}

We denote $\mathcal{F}_i := \{F\in \mathcal{F}:|F|=i\}$ for each $i\in [n]$.  
The diversity of a family $\mathcal{F}$ is denoted by  
$\gamma (\mathcal{F}) := |\mathcal{F}| - \Delta (\mathcal{F})$, where $\Delta (\mathcal{F})$ 
is the maximum degree of $\mathcal{F}$. In other words, the diversity of $\mathcal{F}$ is defined as the minimum number of $k$-sets in $\mathcal{F}$ such that deleting these $k$-sets results in a star. 
The concept of diversity has proved to be extremely significant in the study of $k$-uniform intersecting families; see, e.g.,  \cite{Kua2018,Huang2019,Fra2020,FK2021,FW2024} 
for recent results.

The second main result of this paper is to characterize the extremal families with the $s$-union property 
under certain conditions. Our result implies some stability results of Katona's theorem, including a result of Frankl \cite{Fra2017} as well as a recent result of the third author and Wu \cite{L24}.

\begin{theorem}\label{main5}
 Let $r\geq 1$ and $4 \leq s \leq n-2$ be integers. Suppose that  $\mathcal{F} \subseteq 2^{[n]}$ is $s$-union. \\ 
 {\rm (i)} If $s=2 d$ for some $d \geq 2$ and $|\mathcal{F} _{d+1}|\geq r$, then for $ r \leq d-1$,
$$
|\mathcal{F}| \leq \sum_{0 \leq i \leq d}\binom{n}{i}-\binom{n-d}{d}+\binom{n-d-r}{d-r}+r, 
$$
where the equality holds if and only if $\mathcal{F} \cong \mathcal{W}_r(n, 2 d)$, or two more possibilities:  when $d=r+1$ and  $\mathcal{F}\cong\mathcal{W}^*(n, 2r+2)$, or when $d=3$ and  $\mathcal{F}\cong { \mathcal{W}^{\sharp}(n, 6)}$; 
For $d\leq r \leq n-d$, 
$$
|\mathcal{F}| \leq \sum_{0 \leq i \leq d}\binom{n}{i}-\binom{n-d}{d}+n-d, 
$$
where the equality  holds if and only if $\mathcal{F}\cong\mathcal{W}^*(n, 2 d)$, or when $d=3$ and $\mathcal{F}\cong { \mathcal{W}^{\sharp}(n, 6)}$. \\ 
{\rm (ii)} If $s=2 d+1$ for some $d \geq 2$ and $\gamma(\mathcal{F} _{d+1})\geq r$, then for $r \leq d-1$, 
$$
|\mathcal{F}| \leq \sum_{0 \leq i \leq d}\binom{n}{i}+\binom{n-1}{d}-\binom{n-d-1}{d}+\binom{n-d-r-1}{d-r}+r, 
$$
where the equality holds if and only if $\mathcal{F} \cong \mathcal{W}_r(n, 2 d+1)$, or two more possibilities: when $d=r+1$ and  $\mathcal{F}\cong\mathcal{W}^*(n, 2r+3)$, or when $d=3$ and  $\mathcal{F}\cong { \mathcal{W}^{\sharp}(n, 7)}$;
For $d\leq r \leq n-d-1$, 
$$
|\mathcal{F}| \leq \sum_{0 \leq i \leq d}\binom{n}{i}+\binom{n-1}{d}-\binom{n-d-1}{d}+n-d-1, 
$$
where the equality  holds if and only if $\mathcal{F}\cong\mathcal{W}^*(n, 2 d+1)$, or when $d=3$ and  $\mathcal{F}\cong { \mathcal{W}^{\sharp}(n, 7)}$.
\end{theorem}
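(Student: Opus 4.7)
The plan is to decompose $\mathcal{F}$ into its uniform layers $\mathcal{F}_i := \{F \in \mathcal{F} : |F| = i\}$, bound the lower layers trivially by $\binom{n}{i}$, rule out layers with $i \ge d+2$, and then apply Theorem~\ref{main1} to the crucial top layer $\mathcal{F}_{d+1}$ (together with $\mathcal{F}_d$ in the even case) to obtain the sharp bound. The $s$-union condition translates into $|F \cap F'| \ge |F| + |F'| - s$ for every $F, F' \in \mathcal{F}$, which yields both intersecting and cross-intersecting structure on adjacent layers.

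For part (i) with $s = 2d$, the $s$-union condition immediately forces $\mathcal{F}_{d+1}$ to be $(d+1)$-uniform and $2$-intersecting, and $(\mathcal{F}_{d+1}, \mathcal{F}_d)$ to be cross-intersecting on $[n]$. I would bound $|\mathcal{F}_i| \le \binom{n}{i}$ trivially for $i \le d-1$ (this contributes the Katona portion $\sum_{i \le d-1}\binom{n}{i}$), show by an intersection-counting argument that $\mathcal{F}_i = \emptyset$ for $i \ge d+2$ at the extremum (such a large set would impose $|F \cap F'| \ge i + |F'| - 2d$ on every other member, cutting $|\mathcal{F}_{d+1}|$ below $r$), and then invoke Theorem~\ref{main1} with parameters $k = d, t = 1$, together with the hypothesis $|\mathcal{F}_{d+1}| \ge r$, to bound $|\mathcal{F}_{d+1}| + |\mathcal{F}_d|$ by precisely the claimed quantity. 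Splitting into the subcases $r \le d-1$ and $d \le r \le n-d$ follows the corresponding splitting in Theorem~\ref{main1}.

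For part (ii) with $s = 2d+1$, the layer $\mathcal{F}_{d+1}$ is only $1$-intersecting, so Theorem~\ref{main1} does not apply directly. The idea is to pass to the non-star part: let $x$ be a maximum-degree element of $\mathcal{F}_{d+1}$ and write $\mathcal{F}_{d+1} = \mathcal{F}_{d+1}^x \sqcup \mathcal{F}_{d+1}^{\bar x}$, so that $|\mathcal{F}_{d+1}^{\bar x}| = \gamma(\mathcal{F}_{d+1}) \ge r$. The $d$-uniform family $\{F \setminus \{x\} : F \in \mathcal{F}_{d+1}^x\}$ and the $(d+1)$-uniform family $\mathcal{F}_{d+1}^{\bar x}$ are cross-intersecting on $[n] \setminus \{x\}$ of size $n-1$. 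The key step is to prove, via a partial shifting that fixes $x$ (preserving both the diversity and each layer size), that $\mathcal{F}_{d+1}^{\bar x}$ is in fact $2$-intersecting on $[n] \setminus \{x\}$: if two such members $A, B$ had $|A \cap B| = 1$, shifting an element of $A$ down to the minimum index of $[n] \setminus \{x\}$ would produce a member of $\mathcal{F}_{d+1}$ disjoint from $B$, contradicting intersectingness. Once this is in place, Theorem~\ref{main1} applied with $k = d$, $t = 1$, $r' = r$ on the reduced ground set yields the required bound on $|\mathcal{F}_{d+1}|$. Combined with the trivial bounds $|\mathcal{F}_i| \le \binom{n}{i}$ for $i \le d$ and the vanishing of upper layers, this produces the claimed estimate.

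The equality characterization is obtained by tracing the extremal configurations of Theorem~\ref{main1}: all low layers must be complete, all layers with $i \ge d+2$ must vanish, and the top pair must realize one of the listed extrema. This gives the families $\mathcal{W}_r(n, s)$ and $\mathcal{W}^*(n, s)$ from the two generic extrema, plus the additional family $\mathcal{W}^{\sharp}(n, s)$ arising from the special extremum occurring at $k = 3$ in Theorem~\ref{main1} (corresponding to $d = 3$ here). The main obstacle will lie in part (ii): arranging the partial shifting so that the diversity $\gamma(\mathcal{F}_{d+1}) \ge r$, the $s$-union property, and each layer size are simultaneously preserved, while upgrading the non-star part to $2$-intersecting on the reduced ground set — without this upgrade, Theorem~\ref{main1} cannot be invoked. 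A secondary difficulty is excluding layers $\mathcal{F}_i$ with $i \ge d+2$ from the extremum, which requires a delicate intersection count against the lower bound on $|\mathcal{F}_{d+1}|$ or $\gamma(\mathcal{F}_{d+1})$.
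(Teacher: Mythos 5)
Your plan for part (i) matches the paper's: decompose $\mathcal{F}$ into layers, observe that $\mathcal{F}_{d+1}$ is $2$-intersecting and cross-intersecting with $\mathcal{F}_d$, and apply Theorem \ref{main1} with $k=d$, $t=1$. One correction there: rather than bounding $|\mathcal{F}_i|\le\binom{n}{i}$ for $i\le d-1$ and separately ``ruling out'' the layers $i\ge d+2$, you should use Katona's pairing inequality $|\mathcal{F}_i|+|\mathcal{F}_{s+1-i}|\le\binom{n}{i}$ (Lemma \ref{51}); the upper layers need not vanish for a general $s$-union family satisfying the hypotheses, and only the joint bound yields the stated inequality (their vanishing is then read off from the equality case of Lemma \ref{51}).

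The genuine gap is in part (ii). You correctly note that $\mathcal{F}_{d+1}$ is only intersecting, and you propose to upgrade its non-star part $\mathcal{F}_{d+1}^{\bar x}$ to a $2$-intersecting family via a ``partial shifting that fixes $x$,'' so that Theorem \ref{main1} can be applied. This step does not work as described. For an arbitrary intersecting family the non-star part need not be $2$-intersecting (take $A=\{2,3,4\}$ and $B=\{4,5,6\}$ together with many sets through the maximum-degree element $1$ meeting both); the $2$-intersecting property of $\mathcal{F}(\bar 1)$ in Lemma \ref{S1} requires the family to be fully shifted, and the shifts $s_{1,j}$ can move sets into the star of $1$ and thereby destroy the hypothesis $\gamma(\mathcal{F}_{d+1})\ge r$ --- exactly the quantity your application of Theorem \ref{main1} depends on. What you are implicitly trying to prove is Theorem \ref{main4} (the bound for intersecting families of diversity at least $r$), and your sketch covers only the favorable case in which shifting terminates with the diversity intact; the paper's proof of Theorem \ref{main4} devotes a lengthy second case (Subcases 2.1 and 2.2, resting on Lemma \ref{main3}) to the situation where some shift $s_{i,j}$ drops the diversity below $r$. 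The intended proof of part (ii) simply applies Theorem \ref{main4} to $\mathcal{F}_{d+1}$; without either citing that theorem or supplying the missing case analysis, your argument for part (ii) is incomplete.
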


 As an example, we use Theorem \ref{main5} to deduce the stability result of Frankl \cite{Fra2017}. To begin with, let  $\mathcal{K}(n,2d):=\{F\subseteq [n] : |F|\le d\}$ and $\mathcal{K}(n,2d+1):=\big\{F\subseteq [n]: |F|\leq d\big\} \cup 
\big\{F \in\binom{[n]}{d+1}: y \in F\big\}$ (where $y \in[n]$) be 
the extremal families in Theorem \ref{thm-K64}. These families are called the Katona families. 
Setting $r=1$ in Theorem \ref{main5}, we get the following corollary immediately.

\begin{corollary}[Frankl \cite{Fra2017}] 
Suppose that $2\le s\le n-2$ and $\mathcal{F}$ is an $s$-union family. \\ 
{\rm (i)} 
If $\mathcal{F}$ is not a subfamily of $\mathcal{K}(n,2d)$, then  
\[ |\mathcal{F}|\le \sum_{0\le i\le d} {n \choose i} - {n-d-1 \choose d} +1, \]
with equality if and only if 
$\mathcal{F} \cong \mathcal{W}_1(n,2d)$, or when $d=2$ and $\mathcal{F} \cong \mathcal{W}^*(n,4)$.  \\
{\rm (ii)} If $\mathcal{F}$ is not a subfamily of $\mathcal{K}(n,2d+1)$, then  
\[  |\mathcal{F}|\le \sum_{0\le i\le d} {n \choose i} + {n-1 \choose d} - {n-d-2 \choose d} +1,  \]
with equality if and only if $\mathcal{F} \cong \mathcal{W}_1(n,2d+1)$, or when $d=2$ and $\mathcal{F} \cong \mathcal{W^*}(n,5)$. 
\end{corollary}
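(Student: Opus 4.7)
The plan is to deduce the corollary from Theorem \ref{main5} by setting $r=1$. The first step is a standard reduction: for any $s$-union family $\mathcal{F}$, the downward closure $\mathcal{D}(\mathcal{F}) := \{G \subseteq [n] : G \subseteq F \text{ for some } F \in \mathcal{F}\}$ remains $s$-union (since $G \subseteq F$ and $G' \subseteq F'$ give $|G \cup G'| \le |F \cup F'| \le s$) and satisfies $|\mathcal{D}(\mathcal{F})| \ge |\mathcal{F}|$, with equality iff $\mathcal{F}$ is already a downset. So I would assume throughout that $\mathcal{F}$ is a downset; the extremal characterization then forces this a posteriori.

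For part (i), a downset $\mathcal{F}$ with $\mathcal{F} \not\subseteq \mathcal{K}(n, 2d) = \binom{[n]}{\le d}$ must contain some set of size $\ge d+1$ and hence, being a downset, at least one $(d+1)$-element set. Thus $|\mathcal{F}_{d+1}| \ge 1 = r$, so Theorem \ref{main5}(i) applies at $r=1$. Pascal's identity $\binom{n-d}{d} = \binom{n-d-1}{d} + \binom{n-d-1}{d-1}$ converts the bound of Theorem \ref{main5}(i) into the claimed $\sum_{i \le d}\binom{n}{i} - \binom{n-d-1}{d} + 1$. The extremal list of Theorem \ref{main5}(i) at $r=1$ collapses to $\mathcal{W}_1(n, 2d)$ (always) and $\mathcal{W}^{*}(n, 4)$ when $d = r+1 = 2$, matching the corollary exactly.

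For part (ii), the translation is a little more delicate because $\mathcal{K}(n, 2d+1)$ depends on a chosen star element $y \in [n]$. Suppose $\mathcal{F}$ is a downset that fails to be contained in $\{F \subseteq [n]:|F|\le d\} \cup \{F \in \binom{[n]}{d+1}: y \in F\}$ for every $y$. I would argue $\gamma(\mathcal{F}_{d+1}) \ge 1$ by cases: either $\mathcal{F}_{d+1}$ is already not a star (so $\gamma \ge 1$ directly), or $\mathcal{F}$ contains some $A$ with $|A| \ge d+2$, and then the downset property forces $\binom{A}{d+1} \subseteq \mathcal{F}_{d+1}$; these $\binom{|A|}{d+1}$ sets cannot lie in a single star since any fixed element covers only $\binom{|A|-1}{d}$ of them, yielding $\gamma(\mathcal{F}_{d+1}) \ge 1$. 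Now Theorem \ref{main5}(ii) at $r=1$, combined with Pascal's identity $\binom{n-d-1}{d} = \binom{n-d-2}{d} + \binom{n-d-2}{d-1}$, delivers the bound, and the extremal list reduces to $\mathcal{W}_1(n, 2d+1)$ and (for $d=2$) $\mathcal{W}^{*}(n, 5)$.

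The Pascal simplifications are routine. The main technical point is the diversity extraction in part (ii): checking that failing to be a subfamily of any star-based Katona family really does yield $\gamma(\mathcal{F}_{d+1}) \ge 1$ after passing to the downset closure, especially in the subtle case where $\mathcal{F}$ breaks the hypothesis only through sets of size $\ge d+2$ rather than through an already non-star collection of $(d+1)$-sets.
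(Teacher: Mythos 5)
Your proposal is correct and follows essentially the same route as the paper, which simply sets $r=1$ in Theorem \ref{main5} and leaves the verification of its hypotheses implicit. Your additional details --- passing to the downward closure so that $|\mathcal{F}_{d+1}|\ge 1$ (resp.\ $\gamma(\mathcal{F}_{d+1})\ge 1$) is actually guaranteed when $\mathcal{F}$ only violates the Katona containment through sets of size $\ge d+2$, together with the Pascal-identity simplifications --- are exactly the bridging steps the paper glosses over, and they are carried out correctly.
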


\subsection{Applications} 

\label{sec-1.3}

As promised, we shall show some quick applications of Theorem \ref{main1} in this section. 
We illustrate that some well-known results 
 due to Frankl, Wang \cite{FW2024} and Kupavskii \cite{K18} could be deduced by applying Theorem \ref{main1}. To begin with, we introduce their results formally. 

\subsubsection{A sharpening of Frankl and Wang}

We begin with the following result on non-empty cross-intersecting families.

\begin{theorem}[Frankl--Wang \cite{FW2024}] \label{F24}
 Let $k \geq 2$, $r\geq 1$ and $n \geq 2 k$ be integers.  Let $\mathcal{F} \subseteq\binom{[n]}{k}$ and $\mathcal{G} \subseteq\binom{[n]}{k}$ be  cross-intersecting families. Suppose that $|\mathcal{G}|\geq |\mathcal{F}| \geq r$.\\
{\rm (i)} If $ r \leq k-1$, then
$$
|\mathcal{F}|+|\mathcal{G}| \leq\binom{n}{k}-\binom{n-k+1}{k}+\binom{n-k-r+1}{k-r}+r.
$$
For $n > 2 k$, the equality holds if and only if $\mathcal{F}=\left\{F_1, \ldots, F_r\right\}$ with $F_i:=[k-1]\cup \{k-1+i\}$ for each $i\in [r]$ and $\mathcal{G}=\left\{G \in\binom{[n]}{k}: G \cap F_i \neq\emptyset~\text{for each}~i\leq r\right\}$, or when $k=r+1$, $\mathcal{F}=\{[r] \cup\{i\}: r+1\le i\le n\}$ and $\mathcal{G}=\left\{G \in\binom{[n]}{r+1}: G \cap[r] \neq \emptyset\right\}$, or when $k=3$, $\mathcal{F}=\mathcal{G}=\left\{G \in\binom{[n]}{3}: 1\in G\right\}$ under isomorphism. \\
{\rm (ii)} If $k\leq r \leq n-k+1$, then
$$
|\mathcal{F}|+|\mathcal{G}| \leq\binom{n}{k}-\binom{n-k+1}{k}+n-k+1.
$$
For $n > 2 k$, the equality holds if and only if $\mathcal{F}=\{[k-1] \cup\{i\}: i \in\{k, \ldots, n\}\}$ and $\mathcal{G}=\left\{G \in\binom{[n]}{k}: G \cap[k-1] \neq \emptyset\right\}$, or when $k=3$, $\mathcal{F}=\mathcal{G}=\left\{G \in\binom{[n]}{3}: 1\in G\right\}$  under isomorphism.
\end{theorem}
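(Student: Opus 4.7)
The plan is to deduce Theorem \ref{F24} from the $t=0$ case of Theorem \ref{main1}. Substituting $t=0$ into Theorem \ref{main1}(i)--(ii) reproduces precisely the bounds claimed in Theorem \ref{F24}(i)--(ii), and the listed extremal families correspond bijectively: in particular, the $k=3$ exceptional family in Theorem \ref{main1} (with $t=0$) collapses to $\mathcal{F}=\mathcal{G}=\{G\in\binom{[n]}{3}:1\in G\}$, matching the $k=3$ extremal of Theorem \ref{F24}. So the entire task reduces to verifying that Theorem \ref{main1} may be invoked.

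I distinguish three cases according to the intersecting behavior of $\mathcal{F}$ and $\mathcal{G}$. \emph{Case 1:} If $\mathcal{F}$ is intersecting, then $\mathcal{F}\subseteq\binom{[n]}{k}$ is $1$-intersecting with $|\mathcal{F}|\geq r$, so Theorem \ref{main1} with $t=0$ applies directly to $(\mathcal{F},\mathcal{G})$, yielding both the bound and the extremal characterization. \emph{Case 2:} If $\mathcal{F}$ is not intersecting but $\mathcal{G}$ is, then the hypothesis $|\mathcal{G}|\geq|\mathcal{F}|\geq r$ allows me to apply Theorem \ref{main1} to the swapped pair $(\mathcal{G},\mathcal{F})$ (with $\mathcal{G}$ now playing the role of the intersecting family); the same sum bound follows, and the extremal configurations can be shown to coincide, up to isomorphism, with those already arising from Case 1. \emph{Case 3:} If neither $\mathcal{F}$ nor $\mathcal{G}$ is intersecting, both contain disjoint pairs and are therefore non-trivial. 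Here I plan to invoke the non-trivial Frankl--Tokushige--Wang stability inequality cited in the introduction (for $\ell=k$), which gives
\[ |\mathcal{F}|+|\mathcal{G}|\leq\binom{n}{k}-2\binom{n-k}{k}+\binom{n-2k}{k}+2, \]
and then compare the right-hand side to the Theorem \ref{F24} quantity.

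The hard part will be Case 3. A direct calculation shows that for small $n$ (for instance $n=2k+1$) and moderately large $r$ (for instance $r=k-1\geq 3$), the Frankl--Tokushige--Wang stability bound above can actually exceed the Theorem \ref{F24} bound by a small additive constant, so the reduction does not go through verbatim. However, the extremal configuration for Frankl--Tokushige--Wang stability has $\min(|\mathcal{F}|,|\mathcal{G}|)=2$, which is excluded as soon as $r\geq 3$ by the hypothesis $|\mathcal{F}|\geq r$ combined with $|\mathcal{G}|\geq|\mathcal{F}|$. To close the gap I would either appeal to a sharper two-sided stability statement in which both families are forced to have moderately large size, or perform an explicit shifting argument: after replacing $\mathcal{F},\mathcal{G}$ by their shifted counterparts, non-intersecting forces $[k]$ and $[k+1,2k]$ into $\mathcal{F}$ (and analogously for $\mathcal{G}$), which imposes rigid cover constraints that can be exploited to sharpen the count. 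Finally, for the extremal characterization one checks that Cases 2 and 3 give strict inequality whenever $n>2k$, so every extremal family in Theorem \ref{F24} ultimately arises from Case 1 via Theorem \ref{main1}.
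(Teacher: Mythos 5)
There is a genuine gap, and you have in fact located it yourself: Case~3 of your plan is not closed. Your own computation shows that the non-trivial Frankl--Tokushige-type bound $\binom{n}{k}-2\binom{n-k}{k}+\binom{n-2k}{k}+2$ can exceed the target bound (e.g.\ $n=9$, $k=4$, $r=3$ gives $118$ versus $117$), and the two remedies you offer --- ``a sharper two-sided stability statement'' or ``an explicit shifting argument'' exploiting rigid cover constraints --- are named but not carried out. As written, the argument does not prove the inequality when neither family is intersecting, and consequently the extremal characterization (which you defer to ``Cases 2 and 3 give strict inequality'') is also unestablished.

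The paper sidesteps the entire case analysis with one observation you are missing: replace $\mathcal{F},\mathcal{G}$ by the lexicographic initial segments $\mathcal{F}'=\mathcal{L}(n,k,|\mathcal{F}|)$ and $\mathcal{G}'=\mathcal{L}(n,k,|\mathcal{G}|)$. By Lemma~\ref{Ln} these remain cross-intersecting, and since both are initial segments of the \emph{same} lexicographic order on $\binom{[n]}{k}$ with $|\mathcal{F}'|\le|\mathcal{G}'|$, one has $\mathcal{F}'\subseteq\mathcal{G}'$; cross-intersection then forces $\mathcal{F}'$ to be intersecting, so Theorem~\ref{main1} with $t=0$ applies directly to $(\mathcal{F}',\mathcal{G}')$ and gives the bound for $|\mathcal{F}|+|\mathcal{G}|=|\mathcal{F}'|+|\mathcal{G}'|$ in all cases at once. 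Note also that the equality analysis cannot simply be read off from Theorem~\ref{main1}, because that theorem characterizes the compressed pair $(\mathcal{F}',\mathcal{G}')$, not the original one; the paper recovers the structure of $\mathcal{F}$ and $\mathcal{G}$ from $|\mathcal{F}|$ and $|\mathcal{G}|$ via the shadow/disjointness lemma of F\"uredi--Griggs and M\"ors (Lemma~\ref{FM}) applied to $\mathcal{D}_k(\mathcal{F})$. If you want to salvage your outline, the cleanest fix is to adopt this compression step in place of your three-case split.
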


Clearly, Theorem \ref{F24} generalizes the Hilton--Milner result (\ref{eq-HM}). We mention that the original proof in \cite{FW2024} did not determine the extremal families of Theorem \ref{F24}.
In this paper, we show that Theorem \ref{F24} is a direct consequence of Theorem \ref{main1}.

\subsubsection{Intersecting family with small diversity} 

We fix the following notation. 
Let $\mathcal{F} \subseteq 2^{[n]}$ be a family. 
For each $i\in [n]$, we denote 
$\mathcal{F}(i)=\left\{F\backslash \{i\}: i \in F \in \mathcal{F}\right\}$ and $\mathcal{F}(\bar{i})=\left\{F \in \mathcal{F}: i \notin F \right\}$. 
Recall that the \textit{diversity} of $\mathcal{F}$ is defined as  
$$\gamma(\mathcal{F}) =\text{min}\{|\mathcal{F}(\bar{i})|: i\in [n]\}.$$
Improving Erd\H{o}s--Ko--Rado's bound in (\ref{eq-EKR}), Hilton and Milner \cite{H67} proved that if $n> 2k$  and $\mathcal{F} \subseteq {[n] \choose k}$ is an intersecting family with 
$\gamma (\mathcal{F})\ge 1$, then $|\mathcal{F}| \le {n-1 \choose k-1} - {n-k -1 \choose k-1} +1$. In 2018, 
Kupavskii \cite{K18} determined the maximum possible size of a $k$-uniform intersecting family with the diversity at least $r$ for every $r\leq n-k$. Let $\mathcal{J}_{k,r}, \mathcal{H}_k$ and $\mathcal{G}_4$ 
be defined in (\ref{eq-Jkr}), (\ref{eq-Hk}) and (\ref{eq-G4}). 

\begin{theorem}[Kupavskii \cite{K18}] 
\label{main4}
Let $k\geq 3, r\geq 1$ and $n > 2 k$ be  integers.  Let $\mathcal{F} \subseteq\binom{[n]}{k}$ be an intersecting family with diversity $\gamma(\mathcal{F}) \geq r$.  \\ 
{\rm (i)} If $ r \leq k-2$, then
$$
|\mathcal{F}| \leq\binom{n-1}{k-1}-\binom{n-k}{k-1}+\binom{n-k-r}{k-r-1}+r,
$$
with equality if and only if $\mathcal{F}\cong\mathcal{J}_{k,r}$, or when $k=r+2$, $\mathcal{F}\cong\mathcal{H}_{r+2}$, or when $k=4$, $\mathcal{F}\cong\mathcal{G}_4$. \\ 
{\rm (ii)}
If $k-1\leq r \leq n-k$, then
$$
|\mathcal{F}| \leq\binom{n-1}{k-1}-\binom{n-k}{k-1}+n-k,
$$
with equality if and only if $\mathcal{F}\cong\mathcal{H}_k$, or when $k=4$, $\mathcal{F}\cong\mathcal{G}_4$.
\end{theorem}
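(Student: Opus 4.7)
The plan is to deduce Theorem \ref{main4} from Theorem \ref{main1} via the link at a maximum-degree element. After relabelling, assume $1$ realises $\Delta(\mathcal{F})$, so that $|\mathcal{F}(\bar 1)|=\gamma(\mathcal{F})\ge r$. Using the paper's convention, $\mathcal{F}(\bar 1)\subseteq\binom{[2,n]}{k}$ and $\mathcal{F}(1)\subseteq\binom{[2,n]}{k-1}$ are two families on the $(n-1)$-element ground set $[2,n]$; because $\mathcal{F}$ is intersecting and $1\notin B$ for $B\in\mathcal{F}(\bar 1)$, the pair $(\mathcal{F}(\bar 1),\mathcal{F}(1))$ is cross-intersecting on $[2,n]$, and $|\mathcal{F}|=|\mathcal{F}(\bar 1)|+|\mathcal{F}(1)|$. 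Substituting $(n,k,t,r)\leftarrow(n-1,k-1,1,r)$ in Theorem \ref{main1} produces precisely the upper bound claimed in Theorem \ref{main4}; moreover, the three extremal pairs listed in Theorem \ref{main1} lift, through this link correspondence, to the three families $\mathcal{J}_{k,r}$ (generic), $\mathcal{H}_k$ (from the $k_{\mathrm{main1}}=r+1$ clause, i.e.\ $k=r+2$), and $\mathcal{G}_4$ (from the $k_{\mathrm{main1}}=3$ clause, i.e.\ $k=4$).

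The only hypothesis of Theorem \ref{main1} still requiring verification is that $\mathcal{F}(\bar 1)$ is $2$-intersecting (since $t+1=2$ under the substitution). I secure this by a controlled shifting: apply every $(i,j)$-shift with $2\le i<j\le n$ to $\mathcal{F}$ (these preserve $|\mathcal{F}|$, the intersecting property, and $|\mathcal{F}(\bar 1)|$), and then apply each $(1,j)$-shift, $j\ge 2$, only when doing so does not force $|\mathcal{F}(\bar 1)|$ below $r$. Let $\widetilde{\mathcal{F}}$ denote the outcome. The Frankl-type observation now reads: if $B_1,B_2\in\widetilde{\mathcal{F}}(\bar 1)$ satisfy $B_1\cap B_2=\{a\}$, then $(B_1\setminus\{a\})\cup\{1\}$ is disjoint from $B_2$, so it cannot lie in $\widetilde{\mathcal{F}}$; thus the shift $(1,a)$ must have been blocked, which forces $|\widetilde{\mathcal{F}}(\bar 1)|=r$ exactly.

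Two cases then arise. In Case (A), $\widetilde{\mathcal{F}}(\bar 1)$ is $2$-intersecting; applying Theorem \ref{main1} to the cross-intersecting pair on $[2,n]$ yields the target bound on $|\mathcal{F}|=|\widetilde{\mathcal{F}}|$, and its equality analysis combined with the link correspondence produces exactly the three families $\mathcal{J}_{k,r}$, $\mathcal{H}_k$ and $\mathcal{G}_4$. In Case (B), $\widetilde{\mathcal{F}}(\bar 1)$ is not $2$-intersecting, so the blocked-shift analysis forces $|\widetilde{\mathcal{F}}(\bar 1)|=r$ and the existence of a pair $B_1,B_2$ with $|B_1\cap B_2|=1$; every $A\in\widetilde{\mathcal{F}}(1)$ must meet both $B_1$ and $B_2$, so by inclusion--exclusion
\[
|\mathcal{F}|\ \le\ r+\binom{n-1}{k-1}-2\binom{n-k-1}{k-1}+\binom{n-2k}{k-1}.
\]
The main obstacle is to show that this Case~(B) bound is strictly smaller than the Theorem \ref{main4} bound for every admissible $(n,k,r)$ with $n>2k$; this reduces to the binomial inequality $\binom{n-k}{k-1}-2\binom{n-k-1}{k-1}+\binom{n-2k}{k-1}<\binom{n-k-r}{k-r-1}$ in case~(i) (and to $\binom{n-k}{k-1}-2\binom{n-k-1}{k-1}+\binom{n-2k}{k-1}<n-k$ in case~(ii)), routine via repeated applications of Pascal's identity. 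Strict inequality in Case~(B) certifies that no new extremal family arises there, completing the extremal characterisation.
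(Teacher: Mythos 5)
Your Case~(A) is essentially the paper's Case~1 (shift, observe that $\mathcal{F}(\bar 1)$ is $2$-intersecting and cross-intersecting with $\mathcal{F}(1)$, and apply Theorem~\ref{main1} with $t=1$ on the ground set $[2,n]$), and that part is fine. The problem is Case~(B): the inequality you call ``routine via Pascal's identity'' is \emph{false} in general. Take $(n,k,r)=(11,5,3)$, which is admissible for part~(i) since $n>2k$ and $r=k-2$. Then
\[
\binom{n-k}{k-1}-2\binom{n-k-1}{k-1}+\binom{n-2k}{k-1}=\binom{6}{4}-2\binom{5}{4}+\binom{1}{4}=15-10+0=5,
\]
while $\binom{n-k-r}{k-r-1}=\binom{3}{1}=3$, so your Case~(B) bound is $203$ against a target of $201$. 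Part~(ii) fails similarly, e.g.\ at $(n,k)=(13,6)$, where the left side is $9$ and $n-k=7$. The loss comes from bounding $|\widetilde{\mathcal{F}}(1)|$ using only the two sets $B_1,B_2$ with $|B_1\cap B_2|=1$: when $r$ is near $k-2$ and $n$ is near $2k$, two sets meeting in one point exclude fewer $(k-1)$-sets than $r$ sets pairwise meeting in $k-1$ points do, so the degenerate configuration is not automatically beaten. A secondary gap: a blocked shift $s_{1,a}$ only tells you that \emph{performing} it would push $|\mathcal{F}(\bar 1)|$ below $r$ (a single shift can remove many sets from $\mathcal{F}(\bar 1)$ at once), so it does not force $|\widetilde{\mathcal{F}}(\bar 1)|=r$; and after selectively applying some $(1,j)$-shifts the family need no longer be shifted on $[2,n]$, which your blocked-shift argument implicitly uses.

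This degenerate case is exactly where the paper does its real work. Instead of jumping to the fully shifted family, it stops at the intermediate family $\mathcal{G}$ just before the offending shift $s_{i,j}$, splits it into the four pieces $\mathcal{G}(i,j)$, $\mathcal{G}(i,\bar j)$, $\mathcal{G}(\bar i,j)$, $\mathcal{G}(\bar i,\bar j)$, and exploits the quantitative consequence $|\mathcal{G}(\bar i,j)\cap\mathcal{G}(i,\bar j)|\le r-1-|\mathcal{G}(\bar i,\bar j)|$ of $\gamma(s_{i,j}(\mathcal{G}))\le r-1$. It then applies Lemma~\ref{main3} (Theorem~\ref{F24} with the extremal families excluded via the condition $|\mathcal{F}\cap\mathcal{G}|\le r-1$) to the cross-intersecting pair $(\mathcal{G}(i,\bar j),\mathcal{G}(\bar i,j))$, plus a separate shadow-type bound on $|\mathcal{G}(i,j)|$ against $\mathcal{G}(\bar i,\bar j)$, and optimizes over the parameter $s=|\mathcal{G}(\bar i,\bar j)|$ through several subcases. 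Some replacement for that finer analysis is needed; your two-set inclusion--exclusion cannot close the degenerate case.
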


 Recently, Frankl and Wang \cite{FW2024} gave a new proof of Theorem \ref{main4} by using Theorem \ref{F24} together with 
 a classical theorem of Frankl involving the maximum degree (see \cite{F87}).  The proofs in \cite{FW2024, K18} did not characterize the extremal families in Theorem \ref{main4}.
As the second application, we shall give an alternative proof of Theorem \ref{main4} by applying Theorem \ref{main1}.

 \paragraph{Organization.}  
The rest of the paper is organized as follows. 
In Section \ref{sec-3}, we give the proof of Theorem \ref{main1}.
In Section \ref{sec-4}, we illustrate 
the applications of Theorem \ref{main1} and we present short proofs of Theorems \ref{F24} and \ref{main4}.  
In Section \ref{sec-prove-thm1.5}, we provide the proof of Theorem \ref{main5}.


\section{Proof of Theorem \ref{main1}} 

\label{sec-3}
To prove Theorem \ref{main1}, we need to  review some fundamental notation and results about the shifting operation.
Let  $\mathcal{F} \subseteq 2^{[n]}$ be a family and $1 \leq i<j \leq n$. The \textit{shifting operator} $s_{i, j}$, discovered by Erdős, Ko and Rado \cite{E61}, is defined as follows:
$$s_{i, j}(\mathcal{F})=\left\{s_{i, j}(F): F \in \mathcal{F}\right\},$$
where
$$
s_{i, j}(F)= \begin{cases}(F \backslash\{j\}) \cup\{i\} & \text { if } j \in F, i \notin F \text { and } (F \backslash\{j\}) \cup\{i\} \notin \mathcal{F}, \\ F & \text { otherwise. }\end{cases}
$$
Obviously, $\left|s_{i, j}(F)\right|=|F|$ and $\left|s_{i, j}(\mathcal{F})\right|=|\mathcal{F}|$.  A frequently used property of $s_{i, j}$ is that it maintains the $t$-intersecting property of a family.

A family $\mathcal{F} \subseteq 2^{[n]}$ is called \textit{shifted} if for all $ F \in \mathcal{F}$,  $i<j$ with $i\notin F$ and $j\in F$, then $(F \backslash\{j\}) \cup\{i\} \in \mathcal{F}$.  
It is well-known that every intersecting family can be transformed to a shifted intersecting family by applying shifting operations repeatedly.  
There are many nice properties on shifted families. For example, 
if $\mathcal{F}$ is a shifted family 
and $\{a_1,\ldots,a_k\}\in \mathcal{F}$ with $a_1<\cdots<a_k$, then for any set  $\{b_1,\ldots,b_k\}$ with $b_1<\cdots<b_k$ and $b_i\leq a_i$ for each $i\in[1,k]$, we have $\{b_1,\ldots,b_k\}\in \mathcal{F}$.

We need the following lemmas.

\begin{lemma}\label{S1}
Let $k\geq 1, t\geq 0$ be integers. Let  $\mathcal{F} \subseteq 2^{[n]}$ be a shifted $(t+1)$-intersecting family. Then $\mathcal{F}(\bar{1})$ is $(t+2)$-intersecting. Moreover,  if $\mathcal{F} \subseteq\binom{[n]}{k+t}$ is a shifted $(t+1)$-intersecting family and $n\geq 2k+t$, then $\mathcal{F}(n)$ is $(t+1)$-intersecting.
\end{lemma}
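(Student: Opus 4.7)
The plan is to prove each of the two assertions by a direct shifting argument; both rely only on the defining property of a shifted family: whenever $F \in \mathcal{F}$, $i < j$, $i \notin F$ and $j \in F$, the set $(F \setminus \{j\}) \cup \{i\}$ belongs to $\mathcal{F}$.

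For the first assertion, I would take arbitrary $F, F' \in \mathcal{F}(\bar{1})$, so that $1 \notin F \cup F'$ and $|F \cap F'| \geq t+1$. In particular $F \cap F'$ is non-empty, so I pick any $j \in F \cap F'$. Since $1 \notin F$ and $j > 1$, the shifted property applied to $F$ yields $F^{*} := (F \setminus \{j\}) \cup \{1\} \in \mathcal{F}$. Using $1 \notin F'$ one computes $F^{*} \cap F' = (F \cap F') \setminus \{j\}$, whose cardinality is $|F \cap F'| - 1$. Since $\mathcal{F}$ is $(t+1)$-intersecting, $|F^{*} \cap F'| \geq t+1$, and therefore $|F \cap F'| \geq t+2$, as required.

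For the second assertion I would take $H, H' \in \mathcal{F}(n)$, so that $H \cup \{n\}, H' \cup \{n\} \in \mathcal{F}$ with $|H| = |H'| = k+t-1$ and $n \notin H \cup H'$. The $(t+1)$-intersecting property applied to these two sets already gives $|H \cap H'| \geq t$. Suppose for contradiction that $|H \cap H'| = t$; then $|H \cup H'| = 2k+t-2$, and the hypothesis $n \geq 2k+t$ yields
\[ |[n-1] \setminus (H \cup H')| \geq (n-1) - (2k+t-2) = n - 2k - t + 1 \geq 1, \]
so I can choose some $i \in [n-1] \setminus (H \cup H')$. Applying the shifted property to $H \cup \{n\}$ (noting $i < n$, $i \notin H \cup \{n\}$, and $n \in H \cup \{n\}$) produces $H \cup \{i\} \in \mathcal{F}$. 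Since $i \notin H' \cup \{n\}$ and $n \notin H \cup \{i\}$, a direct check gives $(H \cup \{i\}) \cap (H' \cup \{n\}) = H \cap H'$, which has size $t$; this contradicts the $(t+1)$-intersecting property of $\mathcal{F}$, completing the argument.

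The main obstacle, though a mild one, lies in identifying in each case the correct element on which to shift: an element of $F \cap F'$ in the first assertion, and an element of $[n-1] \setminus (H \cup H')$ in the second. The arithmetic inequality $n \geq 2k+t$ is used precisely to guarantee that such an $i$ exists in the second case; beyond that, no ingredient other than the shifted property and the intersecting hypothesis enters the proof.
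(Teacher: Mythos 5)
Your proof is correct and follows essentially the same route as the paper: the first part shifts an element of $F\cap F'$ to $1$ and uses the $(t+1)$-intersecting property of $\mathcal{F}$, and the second part uses $n\ge 2k+t$ to find an element outside the union on which to shift $n$. The only cosmetic difference is that you phrase the second part as a contradiction with $|H\cap H'|=t$, whereas the paper argues directly; the underlying computation is identical.
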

\begin{proof}
We may assume that $\mathcal{F}(\bar{1})\neq \emptyset$.
Note that $\mathcal{F}(\bar{1})$ is at least $(t+1)$-intersecting. For any $F_1, F_2 \in \mathcal{F}(\bar{1})$, let $j\in F_1 \cap F_2$.
By shiftedness, we have $(F_1\backslash \{j\})\cup\{1\} \in \mathcal{F}$. It follows that $| F_1 \cap F_2|=| \left((F_1\backslash \{j\})\cup\{1\}\right) \cap F_2|+1\geq t+2$. In addition, we may assume that $\mathcal{F}(n)\neq\emptyset$. For any $E_1, E_2 \in \mathcal{F}(n)$, we have $E_1\cup\{n\}, E_2\cup\{n\}\in \mathcal{F}$. Observe that
$
|E_1\cup E_2\cup\{n\}|\leq 2(k+t)-(t+1)\leq n-1.
$
By shiftedness, there exists $x\notin E_1\cup E_2\cup\{n\}$ such that $E_1\cup\{x\} \in \mathcal{F}$. It is immediate that
$
|E_1\cap E_2|=|(E_1\cup\{x\})\cap (E_2\cup\{n\})|\geq t+1,
$
as desired.
\end{proof}

\begin{lemma}\label{S2}
Let $k\ge 1, t\ge 0$, $n\geq 2k+t$ and $1\leq r \leq n-k-t+1$.
Let  $\mathcal{F} \subseteq\binom{[n]}{k+t}$ be shifted and $|\mathcal{F}|\geq r$. 
If $r \leq n-k-t$, then $|\mathcal{F}(\bar{n})|\geq r$.
If $r=n-k-t+1$, then $|\mathcal{F}(\bar{n})|\geq r-1$.
\end{lemma}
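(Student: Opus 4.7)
The plan is to split on whether $\mathcal{F}$ contains any set through the element $n$. If $\mathcal{F}(n)=\emptyset$, then $\mathcal{F}(\bar n)=\mathcal{F}$, so $|\mathcal{F}(\bar n)|=|\mathcal{F}|\ge r$, which is stronger than both conclusions of the lemma. So the only substance of the argument lies in the opposite case $\mathcal{F}(n)\neq\emptyset$.

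For that case, the key move is to use the coordinatewise characterization of shiftedness (recalled just before Lemma~\ref{S1}) twice, in order to locate an explicit fan of sets inside $\mathcal{F}$. First, pick any $F=\{a_1<\cdots<a_{k+t-1}<n\}\in\mathcal{F}(n)$. Since $a_i\ge i$ for every $i\in[k+t-1]$, the set $[k+t-1]\cup\{n\}$ is coordinatewise dominated by $F$, so shiftedness forces $[k+t-1]\cup\{n\}\in\mathcal{F}$. Applying the same property a second time, for every $j\in\{k+t,k+t+1,\ldots,n\}$ the set $[k+t-1]\cup\{j\}$ is coordinatewise dominated by $[k+t-1]\cup\{n\}$, and hence lies in $\mathcal{F}$ as well. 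Among these $n-k-t+1$ explicit sets, exactly the $n-k-t$ of them with $j\le n-1$ belong to $\mathcal{F}(\bar n)$, giving $|\mathcal{F}(\bar n)|\ge n-k-t$.

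This closes both parts of the lemma: if $r\le n-k-t$, then $|\mathcal{F}(\bar n)|\ge n-k-t\ge r$; and if $r=n-k-t+1$, then $|\mathcal{F}(\bar n)|\ge n-k-t=r-1$. There is no real obstacle in the argument: once one notices that the canonical shifted set $[k+t-1]\cup\{n\}$ must sit inside $\mathcal{F}$ as soon as \emph{any} set through $n$ does, the full fan $\{[k+t-1]\cup\{j\}: k+t\le j\le n\}$ is forced for free. The small loss of one in the boundary case $r=n-k-t+1$ is also necessary, as witnessed by taking $\mathcal{F}$ to be precisely this fan, which has $|\mathcal{F}|=n-k-t+1=r$ and $|\mathcal{F}(\bar n)|=n-k-t=r-1$.
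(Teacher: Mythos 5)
Your proof is correct and follows essentially the same route as the paper: split on whether $\mathcal{F}(n)$ is empty, and in the nonempty case use shiftedness to exhibit $n-k-t$ members of $\mathcal{F}$ avoiding $n$. The only cosmetic difference is that the paper obtains these sets directly as $(F\setminus\{n\})\cup\{x\}$ for each $x\notin F$, without first normalizing to the canonical fan $\{[k+t-1]\cup\{j\}\}$.
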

\begin{proof}
If $\mathcal{F}(n)=\emptyset$, then $|\mathcal{F}(\bar{n})|=|\mathcal{F}|\geq r$. If $\mathcal{F}(n)\neq\emptyset$, then there exists $F\in \mathcal{F}$ such that $n\in F$. Note that $|F|= k+t$. Let $[n]\backslash F =\{x_1,\ldots, x_{n-k-t}\}$. By shiftedness, $(F\backslash\{n\}) \cup\{x_i\}\in \mathcal{F}(\bar{n})$ for all $i\in[1,n-k-t]$. So $|\mathcal{F}(\bar{n})|\geq n-k-t$, and the result follows.
\end{proof}

\begin{lemma}[See \cite{E61}] \label{S3}
 Let $\mathcal{F} \subseteq 2^{[n]}$ and $\mathcal{G} \subseteq 2^{[n]}$ be  cross-intersecting, and $\mathcal{F}$ be $t$-intersecting.
Then $s_{i, j}(\mathcal{F})$ and $s_{i, j}(\mathcal{G})$ are also cross-intersecting, and $s_{i, j}(\mathcal{F})$ is $t$-intersecting.
\end{lemma}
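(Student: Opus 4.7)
The plan is to prove both assertions by straightforward case analysis on whether the shifting operator actually replaces a given set or leaves it fixed. The definition gives us, for every $H \in \mathcal{H}$ with $\mathcal{H} \in \{\mathcal{F},\mathcal{G}\}$, that $s_{i,j}(H)$ equals either $H$ itself or the shifted set $H^{\ast} := (H \setminus \{j\}) \cup \{i\}$. Whenever $s_{i,j}(H) = H$ despite $j \in H$ and $i \notin H$, the definition forces $H^{\ast} \in \mathcal{H}$; this is the crucial fact that lets the mixed cases go through. I will treat the two assertions separately but both use the same four-case split.

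First I would verify that $s_{i,j}(\mathcal{F})$ and $s_{i,j}(\mathcal{G})$ are cross-intersecting. Fix $F' \in s_{i,j}(\mathcal{F})$ and $G' \in s_{i,j}(\mathcal{G})$, coming from $F \in \mathcal{F}$ and $G \in \mathcal{G}$. If neither set was shifted ($F'=F$, $G'=G$), use cross-intersection of the original families directly. If both were shifted, then $i \in F' \cap G'$. The only interesting case is the mixed one, say $F'=F$ and $G' = G^{\ast}$. Here $j \in G$, $i \notin G$, and if $F \cap G$ contains any element $x \neq j$, that $x$ survives in $F \cap G'$. So suppose for contradiction that $F \cap G = \{j\}$. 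Then $j \in F$, and since $s_{i,j}$ fixed $F$ we must have $i \in F$ or $F^{\ast} \in \mathcal{F}$; in the first case $i \in F \cap G'$, and in the second we apply cross-intersection to $F^{\ast}$ and $G$, but $F^{\ast} \cap G = (F \setminus \{j\}) \cap G \cup (\{i\} \cap G) = \emptyset$ since $F \cap G = \{j\}$ and $i \notin G$ — a contradiction. The symmetric mixed case is identical.

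Next I would verify that $s_{i,j}(\mathcal{F})$ is $t$-intersecting. Take $F_1', F_2' \in s_{i,j}(\mathcal{F})$ coming from $F_1, F_2 \in \mathcal{F}$. If both were unchanged the claim is immediate from the original $t$-intersection. If both were shifted then $F_1' \cap F_2' \supseteq ((F_1 \cap F_2) \setminus \{j\}) \cup \{i\}$, and this has size at least $|F_1 \cap F_2|$ regardless of whether $j$ lies in $F_1 \cap F_2$, hence $\geq t$. For the mixed case $F_1'=F_1$, $F_2'=F_2^{\ast}$: if $j \notin F_1$ then $F_1 \cap F_2 = F_1 \cap (F_2 \setminus \{j\}) \subseteq F_1 \cap F_2'$; if $i \in F_1$ then we gain $i$ in the intersection while losing at most $j$, so $|F_1 \cap F_2'| \geq |F_1 \cap F_2| \geq t$; otherwise the fact that $s_{i,j}$ fixed $F_1$ forces $F_1^{\ast} \in \mathcal{F}$, and since $i \notin F_2$ the original $t$-intersection of $F_1^{\ast}$ and $F_2$ yields $|(F_1 \setminus \{j\}) \cap F_2| \geq t$, which is a subset of $F_1 \cap F_2'$.

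The main obstacle, if any, is keeping the case analysis organized and in particular remembering the subtle third reason $s_{i,j}$ may fix a set, namely that $H^{\ast}$ already belongs to the family; this is what rescues the mixed cases and it is easy to miss. Otherwise the argument is a mechanical verification and no auxiliary lemma is required beyond the definition of $s_{i,j}$.
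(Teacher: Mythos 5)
The paper does not actually prove Lemma \ref{S3}; it is stated with a citation to Erd\H{o}s--Ko--Rado \cite{E61}, so there is no internal proof to compare against. Your argument is the standard one and it is correct and complete: the four-way case split is exhaustive, and you correctly exploit the key subtlety that $s_{i,j}$ can fix a set $H$ with $j\in H$ and $i\notin H$ only when $(H\setminus\{j\})\cup\{i\}$ already belongs to the family, which is precisely what rescues the mixed cases in both the cross-intersecting and the $t$-intersecting claims.
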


\begin{lemma}\label{S4}
Let $t\ge 0$, $k\geq 1$ and $n \geq 2 k+t$ be integers.
Let $\mathcal{F} \subseteq\binom{[n]}{k+t}$ and $\mathcal{G} \subseteq\binom{[n]}{k}$ be shifted cross-intersecting families. Then $\mathcal{F}(n)$ and $\mathcal{G}(n)$ are cross-intersecting.
\end{lemma}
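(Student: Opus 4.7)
The plan is to argue by contradiction. Suppose that $\mathcal{F}(n)$ and $\mathcal{G}(n)$ are not cross-intersecting: there exist $F'\in\mathcal{F}(n)$ and $G'\in\mathcal{G}(n)$ with $F'\cap G'=\emptyset$. Unpacking the definitions, $F'\cup\{n\}\in \mathcal{F}$ and $G'\cup\{n\}\in \mathcal{G}$, with $F',G'\subseteq [n-1]$, $|F'|=k+t-1$, and $|G'|=k-1$.

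The first step is to locate a ``free'' coordinate strictly less than $n$. Because $F'$ and $G'$ are disjoint subsets of $[n-1]$, we have $|F'\cup G'|=2k+t-2$. The hypothesis $n\ge 2k+t$ then gives $n-1\ge 2k+t-1>2k+t-2$, so there exists some $x\in [n-1]\setminus (F'\cup G')$; in particular $x<n$.

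The second step is to apply shiftedness of $\mathcal{F}$ at the pair $(x,n)$ to the set $F'\cup\{n\}$: since $x\notin F'\cup\{n\}$ and $n\in F'\cup\{n\}$, the definition of a shifted family forces $F'\cup\{x\}\in \mathcal{F}$. Then $F'\cup\{x\}\in\mathcal{F}$ and $G'\cup\{n\}\in\mathcal{G}$ must intersect by the cross-intersecting hypothesis on the original families. However, $F'\cap G'=\emptyset$, $x\notin G'$ by the choice of $x$, and $n\notin F'\cup\{x\}$, so $(F'\cup\{x\})\cap(G'\cup\{n\})=\emptyset$, a contradiction.

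The main (and only) delicate point is ensuring the existence of such an $x$ in $[n-1]\setminus(F'\cup G')$; this is precisely where the assumption $n\ge 2k+t$ enters, and the bound is tight, because when $n=2k+t-1$ the sets $F'$ and $G'$ could exhaust $[n-1]$ and the above shifting step would have no room to operate. Note also that we only used shiftedness of $\mathcal{F}$; a symmetric argument applying shiftedness of $\mathcal{G}$ at $(x,n)$ to $G'\cup\{n\}$ would give an alternative derivation.
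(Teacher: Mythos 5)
Your proof is correct and follows essentially the same route as the paper's: find a spare element $x\in[n-1]$ outside $F'\cup G'$, use shiftedness to replace $n$ by $x$ in $F'\cup\{n\}$, and invoke the cross-intersecting property of $\mathcal{F}$ and $\mathcal{G}$. The only cosmetic difference is that you argue by contradiction while the paper argues directly for arbitrary members of $\mathcal{F}(n)$ and $\mathcal{G}(n)$.
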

\begin{proof}
The case for $\mathcal{F}(n)=\emptyset$ or $\mathcal{G}(n)=\emptyset$ is trivial. If $\mathcal{F}(n)\neq\emptyset$ and $\mathcal{G}(n)\neq\emptyset$, then there are $F_1\in \mathcal{F}(n)$ and $G_1\in \mathcal{G}(n)$ such that $F_1\cup\{n\}\in \mathcal{F}$ and $G_1\cup\{n\}\in \mathcal{G}$.  Observe that
$
|F_1\cup G_1\cup\{n\}|\leq 2k+t-1\leq n-1.
$
By shiftedness, there exists $x\notin F_1\cup G_1\cup\{n\}$ such that $F_1\cup\{x\} \in \mathcal{F}$. Then 
$
|F_1\cap G_1|=|(F_1\cup\{x\})\cap (G_1\cup\{n\})|\geq 1$, as desired.
\end{proof}

\noindent{\bf Proof of Theorem \ref{main1}.}
For fixed $k$ and $t$, we apply induction on  $n\geq 2 k+t$.
First let us consider the base case $n=2 k+t$. If $\mathcal{G}=\emptyset$, then $|\mathcal{F}|+|\mathcal{G}| =|\mathcal{F}|\leq\binom{2k+t}{k}$.
If $\mathcal{G}\neq\emptyset$, then for any $F \in\binom{[2k+t]}{k+t}$, the cross-intersecting property of $\mathcal{F}$ and $\mathcal{G}$ implies that $F \notin \mathcal{F}$ or $[2 k+t] \backslash F \notin \mathcal{G}$. It follows that
$|\mathcal{F}|+|\mathcal{G}| \leq\binom{2k+t}{k},$
as desired in (i) and (ii). 

Next suppose that $n > 2 k+t$ and the result holds for integers less than $n$ and fixed $k$ and $t$. 
By Lemma \ref{S3}, we may assume that $\mathcal{F}$ and $\mathcal{G}$ are shifted.
By Lemma \ref{S2}, we have $|\mathcal{F}(\bar{n})|\geq r$ for $1\leq r \leq n-k-t$ and $|\mathcal{F}(\bar{n})|\geq r-1>k$ for $r= n-k-t+1$.
 Clearly, $\mathcal{F}(\bar{n})$ is $(t+1)$-intersecting. Note that $\mathcal{F}(\bar{n})$ and
$\mathcal{G}(\bar{n})$ are cross-intersecting. The  { inductive hypothesis} can be applied. 

(i) For  $ r \leq k-1$, we have 
\begin{align}\label{f1}
|\mathcal{F}(\bar{n})|+|\mathcal{G}(\bar{n})| \leq\binom{n-1}{k}-\binom{n-k-t}{k}+\binom{n-k-t-r}{k-r}+r. 
\end{align}

(ii) For $k\leq r \leq n-k-t+1$, we get 
\begin{align}\label{f2}
|\mathcal{F}(\bar{n})|+|\mathcal{G}(\bar{n})| \leq\binom{n-1}{k}-\binom{n-k-t}{k}+n-k-t.
\end{align}

We proceed the proof by considering the following two cases.

{\bf Case 1.} Suppose that $|\mathcal{F}(n)|=0$.\vspace{1mm}

Since $|\mathcal{F}(\bar{n})|=|\mathcal{F}|\geq r$, there exist $F_1, \ldots, F_r \in \mathcal{F}(\bar{n})$.
We define the following families:
\begin{align*}
\mathcal{G}_0:=\left\{G \in\binom{[n-1]}{k-1}: G \cap F_1=\emptyset\right\}, 
\end{align*}
and for each $1\leq j\leq r-1$, we define 
\begin{align*}
 \mathcal{G}_j:=\left\{G \in\binom{[n-1]}{k-1}: G \cap F_i \neq \emptyset ~\text{for every $ i\leq j$, and}~ G \cap F_{j+1}=\emptyset\right\}.
\end{align*}
Recall that $F_1,\ldots ,F_r$ are $(k+t)$-sets, we have 
$|\mathcal{G}_{0}|=\binom{n-1-k-t}{k-1}$. For any $1\leq j\leq r-1$, there exist $x_i\in  { F_{i}}, i\in[j]$ such that $x_1,\ldots, x_j\notin F_{j+1}$, {  where $x_1,\ldots, x_j$ do not need to be pairwise different}. Denote $X=\{x_1,\ldots, x_j\}$. Then $1\leq |X|\leq j$ and hence $|\mathcal{G}_j|\geq {n-1-k-t-|X| \choose k-1-|X|}\geq {n-1-k-t-j \choose k-1-j} $.
Observe that $\mathcal{G}_0, \mathcal{G}_1, \ldots, \mathcal{G}_{r-1}$ are pairwise disjoint. In addition, $\mathcal{F}(\bar{n})$ and $\mathcal{G}(n)$ are cross-intersecting.

(i) For $ r \leq k-1$, we have
    \begin{align}
|\mathcal{G}(n)| \leq \binom{n-1}{k-1}-\sum_{j=0}^{r-1}|\mathcal{G}_j| &\leq \binom{n-1}{k-1}-\sum_{j=0}^{r-1}\binom{n-1-k-t-j}{k-1-j} \notag  \\
& \label{eq-explain} =\binom{n-1}{k-1}-\binom{n-k-t}{k-1}+\binom{n-k-t-r}{k-r-1}.
\end{align} 
{We next show that the equality in (\ref{eq-explain}) holds if and only if $\mathcal{F}(\bar{n})=\left\{F_1, \ldots, F_r\right\}$ with $F_i \cap F_j =[k+t-1]$ for every $ i\neq j$,  and $\mathcal{G}(n)=\left\{G \in\binom{[n-1]}{k-1}: G \cap F_i \neq\emptyset ~\text{for each}~ i\leq r\right\}$. Firstly, suppose on the contrary that there exist two distinct sets $F_{i_0},F_{j_0}\in \mathcal{F}(\bar{n})$ such that $|F_{i_0}\cap F_{j_0}| =k+t-s $ for some $s\ge 2$. We can rename the sets of $\mathcal{F}(\bar{n})$ so that we may assume $F_{i_0}=F_1$ and 
$F_{j_0}=F_2$. Then $|\mathcal{G}_1|= {n-1-k-t \choose k-1} - {n-1-k-t-s \choose k-1} 
> {n-1 -k-t-1\choose k-2}$ since $s\ge 2$, which implies that the second inequality in (\ref{eq-explain}) holds strictly, a contradiction. Thus, we get 
$|F_i\cap F_j|= k+t-1$ for every $i\neq j$. 
Next, we need to show that for any $i\neq j$, the intersection $F_i\cap F_j$ are the same $k+t-1$ vertices. Otherwise, without loss of generality, we may assume that 
$F_1=[k+t-1]\cup \{k+t\},F_2=[k+t-1]\cup \{k+t+1\}$ and $F_3=\{x\}\cup [2,k+t]$ for some $x\notin [k+t+1]$. Note that all $(k-1)$-sets that avoid $F_3$ and contain the element $1$ are in $\mathcal{G}_2$. Thus, we have $|\mathcal{G}_2|\ge {n-1 -k-t -1 \choose k-2} > {n-1-k-t -2 \choose k-3}$, so the second inequality in (\ref{eq-explain}) holds strictly, which is a contradiction. }

It follows from (\ref{f1}) and (\ref{eq-explain}) that
\begin{align*}
|\mathcal{F}|+|\mathcal{G}|=|\mathcal{F}(\bar{n})|+|\mathcal{G}(\bar{n})|+|\mathcal{G}(n)|\leq \binom{n}{k}-\binom{n-k-t+1}{k}+\binom{n-k-t-r+1}{k-r}+r.
\end{align*}
The equality holds only if 
the equality in (\ref{eq-explain}) holds. So 
$\mathcal{F}=\left\{F_1,\ldots,F_r \right\}$ with 
$F_i:=[k+t-1]\cup \{k+t-1+i\}$ for each $i\in [r]$  
and $\mathcal{G}=\left\{G \in\binom{[n]}{k}: G \cap F_i \neq\emptyset\text{~for each~} i\in [r]\right\}$.

(ii) For $k\leq r \leq n-k-t+1$, we obtain 
\begin{align}
|\mathcal{G}(n)| \leq \binom{n-1}{k-1}-\sum_{j=0}^{k-1}|\mathcal{G}_j| &\leq \binom{n-1}{k-1}-\sum_{j=0}^{k-1}\binom{n-1-k-t-j}{k-1-j}  \notag \\ 
& =\binom{n-1}{k-1}-\binom{n-k-t}{k-1}. 
\label{eq-Gn}
\end{align}
It follows from (\ref{f2}) and (\ref{eq-Gn}) that
\begin{align*}
|\mathcal{F}|+|\mathcal{G}| =|\mathcal{F}(\bar{n})|+|\mathcal{G}(\bar{n})|+|\mathcal{G}(n)| \leq \binom{n}{k}-\binom{n-k-t+1}{k}+n-k-t,
\end{align*}
so the desired bound in this case holds strictly.

{\bf Case 2.} Suppose that $|\mathcal{F}(n)|\geq 1$.\vspace{1mm}

 By Lemma \ref{S4}, we know that $\mathcal{F}(n)\subseteq\binom{[n-1]}{k+t-1}$ and $\mathcal{G}(n)\subseteq\binom{[n-1]}{k-1}$ are cross-intersecting. By Lemma \ref{S1}, $\mathcal{F}(n)$ is $(t+1)$-intersecting .
Since $n > 2 k+t$ and $k-1\geq 1$, applying Theorem \ref{F16} to $\mathcal{F}(n)$ and $\mathcal{G}(n)$ yields
\begin{align}\label{f3}
|\mathcal{F}(n)|+|\mathcal{G}(n)| \leq\binom{n-1}{k-1}-\binom{n-k-t}{k-1}+1.
\end{align}
{Furthermore, the equality in  (\ref{f3}) holds if and only if $\mathcal{F}(n)=\Big\{[k+t-1]\Big\}$ and $\mathcal{G}(n)=\Big\{G^* \in\binom{[n-1]}{k-1}: G^*\cap $ $ [k+t-1] \neq \emptyset \Big\}$, or $k=3$, there is one more possibility, $\mathcal{F}(n)=\Big\{[t+1] \cup\{i\}: t+2 \le i \le n-1 \Big\}$ and $\mathcal{G}(n)=\left\{G^* \in\binom{[n-1]}{2}: G^* \cap[t+1] \neq \emptyset\right\}$. 
In the former case,  the shiftedness gives 
 $\mathcal{F} =\Big\{[k+t-1]\cup\{i\}:  k+t\le i \le n\Big\}$ and $\mathcal{G} =\left\{G \in\binom{[n]}{k}: G \cap [k+t-1] \neq \emptyset \right\}$. 
In the later case, we have  $\mathcal{F} =\left\{F\in\binom{[n]}{t+3}: [t+1] \subseteq F\right\}$ and $\mathcal{G}=\left\{G \in\binom{[n]}{3}: G \cap[t+1] \neq \emptyset\right\}.$}

(i) For $ r \leq k-1$,  we get from 
(\ref{f1}) and (\ref{f3}) that 
\begin{align*}
|\mathcal{F}|+|\mathcal{G}|=&|\mathcal{F}(\bar{n})|+|\mathcal{G}(\bar{n})|+|\mathcal{F}(n)|+|\mathcal{G}(n)|\\
\leq&\binom{n-1}{k}-\binom{n-k-t}{k}+\binom{n-k-t-r}{k-r}+r+\binom{n-1}{k-1}-\binom{n-k-t}{k-1}+1\\
=&\binom{n}{k}-\binom{n-k-t+1}{k}+\binom{n-k-t-r+1}{k-r}+r+1-\binom{n-k-t-r}{k-r-1}\\
\leq&\binom{n}{k}-\binom{n-k-t+1}{k}+\binom{n-k-t-r+1}{k-r}+r. 
\end{align*}
Moreover, the equality holds if and only if  $1=\binom{n-k-t-r}{k-r-1}$, that is, $k=r+1$, and {  the equality} in (\ref{f3}) holds.  This implies that $\mathcal{F}=\Big\{[t+r] \cup\{i\}: t+r+1\le i\le n \Big\}$ and $\mathcal{G}=\left\{G \in\binom{[n]}{r+1}: G \cap[t+r] \neq \emptyset\right\}$, or one more possibility when $k=3$, $\mathcal{F}=\left\{F\in\binom{[n]}{t+3}: [t+1] \subseteq F\right\}$ and $\mathcal{G}=\left\{G \in\binom{[n]}{3}: G \cap[t+1] \neq \emptyset\right\}$.

(ii) For $k\leq r \leq n-k-t+1$, by (\ref{f2}) and (\ref{f3}), we have 
\begin{align*}
|\mathcal{F}|+|\mathcal{G}|=&|\mathcal{F}(\bar{n})|+|\mathcal{G}(\bar{n})|+|\mathcal{F}(n)|+|\mathcal{G}(n)|\\
\leq&\binom{n-1}{k}-\binom{n-k-t}{k}+n-k-t+\binom{n-1}{k-1}-\binom{n-k-t}{k-1}+1\\
=&\binom{n}{k}-\binom{n-k-t+1}{k}+n-k-t+1.
\end{align*}
The {  above equality } holds only if the equality in (\ref{f3}) holds. Then 
$\mathcal{F}=\{[k+t-1] \cup\{i\}: i \in\{k+t, \ldots, n\}\}$ and $\mathcal{G}=\left\{G \in\binom{[n]}{k}: G \cap[k+t-1] \neq \emptyset\right\}$, or one more possibility when $k=3$, $\mathcal{F}=\left\{F\in\binom{[n]}{t+3}: [t+1] \subseteq F\right\}$ and $\mathcal{G}=\left\{G \in\binom{[n]}{3}: G \cap[t+1] \neq \emptyset\right\}$.

{In the above discussion, we have determined the extremal families 
$\mathcal{F}$ and $\mathcal{G}$ that attain the required upper bound  under the shifting assumption by Lemma \ref{S3}. 
For completeness, we need to characterize the extremal families in the general case. 
Assume that  the extremal families $\mathcal{F}^{\prime}$ and $\mathcal{G}^{\prime}$ satisfy $s_{i, j}\left(\mathcal{F}^{\prime}\right)=\mathcal{F}$ and $s_{i,j}\left(\mathcal{G}^{\prime}\right)=\mathcal{G}$.
Note that $i<j$.
When $\mathcal{F}=\left\{F_1,\ldots,F_r \right\}$ with 
$F_i:=[k+t-1]\cup \{k+t-1+i\}$ for each $i\in [r]$, if $i,j\in [k+t-1]$, then $\mathcal{F}^{\prime}=\mathcal{F}$.
If $i\in [k+t-1]$ and $j\in [k+t, k+t+r-1]$, by symmetry, we assume that $i=1$ and $j=k+t$, then $\mathcal{F}^{\prime}=\{[2,k+t\}\cup\{q\}: q\in\{1\}\cup[k+t+1, k+t+r-1]\}$. Thus $\mathcal{F}^{\prime} \cong\mathcal{F}$. If $i\in [k+t-1]$ and $j\notin [1, k+t+r-1]$, we may assume that $i=1$, then 
$\mathcal{F}^{\prime}=\{[2,k+t-1\}\cup\{q,j\}: q\in[k+t, k+t+r-1]\}$. Thus  $\mathcal{F}^{\prime}\cong\mathcal{F}$. 
If $i,j\in[k+t, k+t+r-1]$, then $\mathcal{F}^{\prime}=\mathcal{F}$.
If $i\in[k+t, k+t+r-1]$ and $j\notin [1, k+t+r-1]$, we may assume that $i=k+t$ and $j=k+t+r$, then $\mathcal{F}^{\prime}=\{[1,k+t-1\}\cup\{q\}: q\in[k+t+1, k+t+r]\}$.  Thus  $\mathcal{F}^{\prime}\cong\mathcal{F}$.
If $i,j\notin [1, k+t+r-1]$, then $\mathcal{F}^{\prime} =\mathcal{F}$. 
Therefore, we conclude that in the case $\mathcal{F}=\left\{F_1,\ldots,F_r \right\}$ with 
$F_i:=[k+t-1]\cup \{k+t-1+i\}$ for each $i\in [r]$, we have $\mathcal{F}^{\prime} \cong\mathcal{F}$. The family $\mathcal{G}^{\prime}$ is just the maximal subfamily of $\binom{[n]}{k}$ that is cross-intersecting with $\mathcal{F}^{\prime}$.
The same is true when $\mathcal{F}$ is the other case.
Let $\mathcal{F}_0$ and $\mathcal{G}_0$ be the original families before applying the shifting operations. 
In other words, $\mathcal{F}$ and $ \mathcal{G}$ are obtained from $\mathcal{F}_0 $ and $ \mathcal{G}_0$ by applying a series of shifting operations.
Thus, we have $\mathcal{F}_0=\mathcal{F}$ and $\mathcal{G}_0=\mathcal{G}$ under isomorphism. } 
$\hfill \square$\vspace{4mm}

\section{Short proofs of Theorems \ref{F24} and \ref{main4}} 

\label{sec-4}

We begin with the following notation.
For $n \geq k+\ell$ and a family $\mathcal{F} \subseteq\binom{[n]}{k}$, we define
$$
\mathcal{D}_{\ell}(\mathcal{F}):=\left\{D \in\binom{[n]}{\ell}: \exists F \in \mathcal{F}, D \cap F=\emptyset\right\}.
$$
Then  $\mathcal{F}$ and $\mathcal{G} \subseteq\binom{[n]}{\ell}$  are cross-intersecting if and only if $\mathcal{F} \cap \mathcal{D}_k(\mathcal{G})=\emptyset$ or equivalently $\mathcal{G} \cap \mathcal{D}_{\ell}(\mathcal{F})=\emptyset$. Moreover, for given $\mathcal{F}$, $\mathcal{G}=\binom{[n]}{\ell} \backslash \mathcal{D}_{\ell}(\mathcal{F})$ is the largest family that is cross-intersecting with $\mathcal{F}$, and for given $\mathcal{G}$, $\mathcal{F}=\binom{[n]}{k} \backslash \mathcal{D}_{k}(\mathcal{G})$ is the largest family that is cross-intersecting with $\mathcal{G}$.

\begin{lemma}[See \cite{F86, M85}] \label{FM}
 Suppose that $n> k+\ell, \mathcal{F} \subseteq\binom{[n]}{k}, |\mathcal{F}|=\binom{n-r}{k-r}$ for some  $ r \leq k$. Then
$
\left|\mathcal{D}_{\ell}(\mathcal{F})\right| \geq\binom{ n-r}{\ell}$, 
with strict inequality unless $ \mathcal{F}=\left\{F \in\binom{[n]}{k}: R \subseteq F\right\}$ for some $R \in \binom{[n]}{r}$.
\end{lemma}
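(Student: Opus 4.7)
The plan is to recognize $\mathcal{D}_{\ell}(\mathcal{F})$ as the $\ell$-shadow of the complementary family and then invoke the Kruskal--Katona theorem in its Lov\'asz form. First, I would set $\mathcal{F}^{c} := \{[n]\setminus F : F \in \mathcal{F}\} \subseteq \binom{[n]}{n-k}$. An $\ell$-set $D$ belongs to $\mathcal{D}_{\ell}(\mathcal{F})$ precisely when there exists $F \in \mathcal{F}$ with $D \subseteq [n]\setminus F$, which in turn means $D$ lies in the $\ell$-shadow $\partial_{\ell}(\mathcal{F}^{c}) := \{L \in \binom{[n]}{\ell} : L \subseteq M \text{ for some } M \in \mathcal{F}^{c}\}$. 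This identification requires $\ell \le n-k$, which is immediate from the hypothesis $n > k + \ell$. Hence $|\mathcal{D}_{\ell}(\mathcal{F})| = |\partial_{\ell}(\mathcal{F}^{c})|$ and $|\mathcal{F}^{c}| = \binom{n-r}{k-r} = \binom{n-r}{n-k}$.

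Next, I would apply the Lov\'asz form of Kruskal--Katona: for any $a$-uniform family $\mathcal{A}$ with $|\mathcal{A}| = \binom{x}{a}$ for some real $x \ge a$ and any integer $0 \le b \le a$, one has $|\partial_{b}(\mathcal{A})| \ge \binom{x}{b}$. Taking $a = n-k$, $b = \ell$, and $x = n-r$ (which satisfies $x \ge a$ because $r \le k$) yields $|\partial_{\ell}(\mathcal{F}^{c})| \ge \binom{n-r}{\ell}$, which is precisely the bound sought. For the equality case, I would rely on the integer-$x$ characterization: when $x$ is a positive integer, $|\partial_{b}(\mathcal{A})| = \binom{x}{b}$ holds if and only if $\mathcal{A} = \binom{S}{a}$ for some $S \in \binom{[n]}{x}$. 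Translating through the complement, equality forces $\mathcal{F}^{c} = \binom{S}{n-k}$ for some $S \in \binom{[n]}{n-r}$; writing $R := [n] \setminus S$, this is the same as $\mathcal{F} = \{F \in \binom{[n]}{k} : R \subseteq F\}$, as the cardinality assumption already saturates this star.

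The main obstacle is the equality characterization rather than the inequality itself: the compression proof of Kruskal--Katona is quite short, but pinning down that integer-$x$ saturation forces a ``full subcube'' shape requires a careful colex or shifting analysis. If one prefers a self-contained route that avoids appealing to Kruskal--Katona, the alternative is a shifting induction on $n$: first verify that the operators $s_{i,j}$ preserve $|\mathcal{F}|$ and do not increase $|\mathcal{D}_{\ell}(\mathcal{F})|$, reducing to the left-compressed case; then split $\mathcal{F} = \mathcal{F}(\bar n) \cup \{F \in \mathcal{F} : n \in F\}$ and apply the inductive hypothesis to the two traces on $[n-1]$, using shiftedness to rule out every extremal configuration other than a star at some $r$-set.
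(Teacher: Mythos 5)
Your reduction is correct and is, in substance, the standard route: the paper gives no proof of this lemma at all, quoting it from F\"uredi--Griggs and M\"ors, and those references prove exactly the statement you reduce to. Your complementation step is right ($D\cap F=\emptyset$ iff $D\subseteq [n]\setminus F$, so $\mathcal{D}_{\ell}(\mathcal{F})$ is the $\ell$-shadow of the $(n-k)$-uniform family $\mathcal{F}^c$, and $\binom{n-r}{k-r}=\binom{n-r}{n-k}$), the Lov\'asz-form Kruskal--Katona bound then gives $\left|\mathcal{D}_{\ell}(\mathcal{F})\right|\ge\binom{n-r}{\ell}$ since $n-r\ge n-k>\ell$, and translating the clique $\binom{S}{n-k}$ back through complementation does yield the star over $R=[n]\setminus S$. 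The one point to be careful about, which you correctly flag as the main obstacle, is that the ``integer-$x$ equality characterization'' (equality in the Lov\'asz bound with $|\mathcal{A}|=\binom{x}{a}$, $x\in\mathbb{Z}$, forces $\mathcal{A}=\binom{S}{a}$) is \emph{not} a corollary of the bare Kruskal--Katona inequality; it is itself the substantive theorem of the very papers cited for this lemma, so invoking it as a black box makes your argument a faithful translation of the citation rather than an independent proof. To make the proof self-contained you would indeed need to carry out the shifting/compression analysis you sketch at the end (or the colex uniqueness argument), which is where all the real work lies.
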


Next we define the lexicographic order on the $k$-element subsets of $[n]$. We say that $F$ is smaller than $G$ in the lexicographic order if $\min \{x: x\in F \backslash G\} 
<\min \{y: y\in  G \backslash F\}$ holds. For $0\leq m\leq \binom{n}{k}$, let $\mathcal{L}(n, k, m)$ be the family of the first $m$ $k$-sets in the lexicographic order. 
The following lemma \cite{H76} will be used in our proof;  
see \cite[p. 266]{FK2017} for a detailed proof.

\begin{lemma}[See \cite{H76}] \label{Ln}
Let $k, \ell, n$ be positive integers with $n> k+\ell$. If $\mathcal{F} \subseteq\binom{[n]}{k}$ and $\mathcal{G} \subseteq\binom{[n]}{\ell}$ are cross-intersecting, then $\mathcal{L}(n, k,|\mathcal{F}|)$ and $\mathcal{L}(n, \ell,|\mathcal{G}|)$ are cross-intersecting.
\end{lemma}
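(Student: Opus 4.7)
The plan is to prove Lemma \ref{Ln} in two stages: first reduce to the shifted case via the standard shift operations, then perform a finer compression via induction on $n$. Since the conclusion depends only on the sizes $m_1 := |\mathcal{F}|$ and $m_2 := |\mathcal{G}|$, it suffices to exhibit any chain of size-preserving, cross-intersection-preserving operations that transforms $(\mathcal{F}, \mathcal{G})$ into the lex-initial pair $(\mathcal{L}(n, k, m_1), \mathcal{L}(n, \ell, m_2))$.

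For the first stage, I would apply all shifts $s_{i,j}$ with $i < j$ simultaneously to both families. Lemma \ref{S3} guarantees that cross-intersection is preserved, and each $s_{i,j}$ is a bijection on the underlying family. After full stabilization, both $\mathcal{F}$ and $\mathcal{G}$ are shifted. Note however that shiftedness is strictly weaker than being a lex-initial segment: for instance, $\{\{1,2\}, \{1,3\}, \{2,3\}\} \subseteq \binom{[4]}{2}$ is shifted, yet the lex-initial segment of size $3$ is $\{\{1,2\}, \{1,3\}, \{1,4\}\}$. Thus a second stage is required.

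For the second stage, I would use induction on $n$, with a small base case (e.g.\ $n = k+\ell+1$) verified by direct inspection. Partition $\mathcal{F} = \mathcal{F}^1 \sqcup \mathcal{F}^{\bar 1}$ according to whether the element $1$ is present, and identify $\mathcal{F}^1$ with a family of $(k-1)$-sets on $[2, n]$ via $F \mapsto F \setminus \{1\}$; do the same for $\mathcal{G}$. Shiftedness forces the cross-pairs $(\mathcal{F}^1, \mathcal{G}^{\bar 1})$, $(\mathcal{F}^{\bar 1}, \mathcal{G}^1)$, and $(\mathcal{F}^{\bar 1}, \mathcal{G}^{\bar 1})$ to be cross-intersecting on $[2, n]$, so the inductive hypothesis can be applied to each, producing the corresponding lex-initial link families on $[2, n]$. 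Reassembly is possible because the lex order on $\binom{[n]}{k}$ lists all $k$-sets containing $1$ first (in lex order of their traces on $[2, n]$) followed by all $k$-sets avoiding $1$ (in lex order on $[2, n]$), so the natural concatenation of the two link segments is again a lex-initial segment on $[n]$.

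The principal obstacle is that the decomposition $m_1 = |\mathcal{F}^1| + |\mathcal{F}^{\bar 1}|$ inherited from a shifted $\mathcal{F}$ need not coincide with the decomposition dictated by the lex-initial segment of size $m_1$, which would require $|\mathcal{F}^1| = \min(m_1, \binom{n-1}{k-1})$; the analogous issue arises for $\mathcal{G}$. One must argue that these splits can be rebalanced without destroying cross-intersection, and likewise on the $\mathcal{G}$ side. This rebalancing step is the technical heart of the argument, and it is structurally parallel to the rebalancing used in proofs of the Kruskal--Katona theorem.
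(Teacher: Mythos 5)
The paper itself offers no proof of Lemma \ref{Ln}; it is quoted from Hilton's unpublished manuscript \cite{H76} with a pointer to \cite[p.~266]{FK2017} for a detailed argument, so there is no internal proof to compare yours against. Judged on its own terms, your proposal has a genuine gap, and it is precisely the step you defer. The shifting stage and the description of how a lex-initial segment splits over the element $1$ are correct, and the three pairs $(\mathcal{F}(1),\mathcal{G}(\bar 1))$, $(\mathcal{F}(\bar 1),\mathcal{G}(1))$, $(\mathcal{F}(\bar 1),\mathcal{G}(\bar 1))$ are indeed cross-intersecting on $[2,n]$ (this needs no shiftedness: a common element of $F$ and $G$ cannot be $1$ once one of them avoids $1$). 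But the inductive hypothesis then only controls the lex segments of sizes $\bigl(|\mathcal{F}(1)|,|\mathcal{G}(\bar 1)|\bigr)$, etc., whereas cross-intersection of $\mathcal{L}(n,k,m_1)$ and $\mathcal{L}(n,\ell,m_2)$ requires the same statement for $a_1=\min\bigl(m_1,\binom{n-1}{k-1}\bigr)$ paired with $b_2=m_2-\min\bigl(m_2,\binom{n-1}{\ell-1}\bigr)$. Since $a_1\ge|\mathcal{F}(1)|$, the pair $\bigl(\mathcal{L}(n-1,k-1,a_1),\mathcal{L}(n-1,\ell,b_2)\bigr)$ involves a first family that may be strictly larger than anything the induction gives you, so the needed cross-intersection does not follow from what you have proved.

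The ``rebalancing'' is not a routine patch to be supplied later: it is the entire content of the lemma. In particular it cannot be done by exchanging sets inside $\mathcal{F}$ while $\mathcal{G}$ is held fixed. For example, with $k=\ell=2$ and $n=5$, the families $\mathcal{F}=\mathcal{G}=\binom{[3]}{2}$ are cross-intersecting and $\mathcal{F}$ is shifted with $|\mathcal{F}(1)|=2<3=\min\bigl(3,\binom{4}{1}\bigr)$; the rebalanced family $\mathcal{L}(5,2,3)=\{12,13,14\}$ is \emph{not} cross-intersecting with the fixed $\mathcal{G}$ (as $\{1,4\}\cap\{2,3\}=\emptyset$), even though $\mathcal{L}(5,2,3)$ is cross-intersecting with $\mathcal{L}(5,2,3)$. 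So both families must be moved simultaneously, and justifying that this is possible is exactly the two-family Kruskal--Katona phenomenon you are trying to prove. The standard ways to close the gap are either to reduce to the Kruskal--Katona theorem --- note that $\mathcal{G}$ is cross-intersecting with $\mathcal{F}$ iff $\mathcal{G}\cap\mathcal{D}_\ell(\mathcal{F})=\emptyset$, and $\mathcal{D}_\ell(\mathcal{F})$ is the $\ell$-shadow of the complement family $\{[n]\setminus F: F\in\mathcal{F}\}$, which lex-initial $\mathcal{F}$ minimizes and turns into an initial segment --- or to run the induction with an explicit comparison of the split sizes as in \cite{FK2017}. As written, your argument sets up the reduction but does not prove the lemma.
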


\medskip 
Now we are in a position to prove Theorem \ref{F24}.\vspace{3mm}

\noindent{\bf Proof of Theorem \ref{F24}.}
If $n = 2 k$, then the upper bounds in both  (i) and (ii) are $\binom{2k}{k}$. This bound holds trivially. 
Now we assume that $n > 2 k$.
We denote $\mathcal{F}':=\mathcal{L}(n, k,|\mathcal{F}|)$ and $\mathcal{G}':=\mathcal{L}(n, k,|\mathcal{G}|)$. 
By Lemma \ref{Ln}, we know that 
$\mathcal{F}'$ and $\mathcal{G}'$ are cross-intersecting. Since $|\mathcal{G}'| \geq|\mathcal{F}'| \geq r$, we 
have $\mathcal{F}' \subseteq \mathcal{G}'$, which implies that $\mathcal{F}'$ is intersecting. Next, we prove part (i) only, since part (ii) can be proved similarly. 
In the case $r\le k-1$, 
setting $t=0$ in Theorem \ref{main1} yields 
\begin{equation}
    \label{eq-FW2024} 
    |\mathcal{F}| + |\mathcal{G}| = 
    |\mathcal{F}'| + |\mathcal{G}'| \le {n \choose k} - {n-k+1 \choose k} + {n-k-r +1 \choose k-r} +r.  
\end{equation}
We next characterize the extremal families $\mathcal{F}$ and $\mathcal{G}$ attaining the upper bound.  
By Theorem \ref{main1}, 
the equality in (\ref{eq-FW2024}) holds only if $\mathcal{F}'=\left\{F_1',\ldots,F_r' \right\}$ with 
$F_i':=[k-1]\cup \{k-1+i\}$ for each $i\in [r]$  
and $\mathcal{G}'=\left\{G \in\binom{[n]}{k}: G \cap F_i' \neq\emptyset\text{~for each~} i\in [r]\right\}$, or when $k=r+1$, $\mathcal{F}'=\Big\{[r] \cup\{i\}: i \in\{r+1, \ldots, n\} \Big\}$ and $\mathcal{G}'=\left\{G \in\binom{[n]}{r+1}: G \cap[r] \neq \emptyset\right\}$, or when $k=3$, $\mathcal{F}'=\left\{F\in\binom{[n]}{3}: 1 \in F\right\}$ and $\mathcal{G}'=\left\{G \in\binom{[n]}{3}: 1\in G \right\}$. We need to return to the structure of $\mathcal{F}$ and $\mathcal{G}$. 
In the first extremal case, 
we have $|\mathcal{F}|=|\mathcal{F}'|=r$. 
Denote $\mathcal{F}=\left\{F_1,\ldots,  F_r\right\}$. Similar to the proof of the equality in (\ref{eq-explain}), we can show that $F_i:=[k-1]\cup \{k-1+i\}$ for each $i\in [r]$ and $\mathcal{G}=\left\{G \in\binom{[n]}{k}: G \cap F_i \neq\emptyset ~\text{for each}~ i\leq r\right\}$ under isomorphism. 
In the second extremal case, i.e., $k=r+1$, we have $|\mathcal{F}|=|\mathcal{F}'|=n-r = 
{n-r \choose k-r}$ and $|\mathcal{G}|=|\mathcal{G}'|={n \choose k} - {n-r \choose k}$. 
For a fixed family $\mathcal{F}\subseteq\binom{[n]}{k}$, 
the maximality of $|\mathcal{F}| + |\mathcal{G}|$ yields $\mathcal{G} = \binom{[n]}{k} \setminus \mathcal{D}_k(\mathcal{F})$. So we get  
$|\mathcal{D}_k(\mathcal{F})|={n-r \choose k}$, which implies that the equality case of  Lemma \ref{FM} occurs. 
Thus, it follows from Lemma \ref{FM} that  $\mathcal{F} = \{[r] \cup\{i\}: r+1 \le i \le n\}$ and  $\mathcal{G}= \left\{G \in\binom{[n]}{r+1}: G \cap[r] \neq \emptyset\right\}$ under isomorphism.
In the third extremal case, we have 
$|\mathcal{F}|=|\mathcal{G}|={n-1 \choose 2}$. Similarly, we get  $| \mathcal{D}_{3}(\mathcal{F})|=\binom{n-1}{3}$. By Lemma \ref{FM}, we obtain $\mathcal{F}= \left\{F\in\binom{[n]}{3}: 1\in F\right\}$ and 
$\mathcal{G}= \left\{G\in\binom{[n]}{3}: 1\in G\right\}$ under isomorphism.  
$\hfill \square$\vspace{3mm}

Excluding the extremal families in Theorem \ref{F24}, we obtain the following result. 

\begin{lemma}\label{main3}
Let $k\geq 2, r \geq 1$ and $n \geq 2 k+1$ be integers.   Let $\mathcal{F} \subseteq\binom{[n]}{k}$ and $\mathcal{G} \subseteq\binom{[n]}{k}$ be cross-intersecting families.
Suppose that $|\mathcal{F}| \geq r$, $|\mathcal{G}| \geq r$ and $|\mathcal{F} \cap \mathcal{G}|\leq r-1$. 
\begin{itemize}
\item[{\rm (i)}] If $r \leq k-1$, then
$
|\mathcal{F}|+|\mathcal{G}| \leq\binom{n}{k}-\binom{n-k+1}{k}+\binom{n-k-r+1}{k-r}+r-1.
$ 

\item[{\rm (ii)}]
If $k\leq r \leq n-k+1$, then
$
|\mathcal{F}|+|\mathcal{G}| \leq\binom{n}{k}-\binom{n-k+1}{k}+n-k.
$
\end{itemize}
\end{lemma}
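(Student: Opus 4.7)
The plan is to derive Lemma \ref{main3} from Theorem \ref{F24} by a ``one-step stability'' argument: the extra hypothesis $|\mathcal{F}\cap\mathcal{G}|\le r-1$ rules out every extremal configuration of Frankl--Wang, so the Frankl--Wang bound must be strict, and since everything in sight is an integer we gain exactly the ``$-1$'' present in the statement. Since the hypotheses of Lemma \ref{main3} are symmetric in $\mathcal{F}$ and $\mathcal{G}$, I may assume without loss of generality that $|\mathcal{G}|\ge |\mathcal{F}|\ge r$, which puts us squarely in the setup of Theorem \ref{F24}. That theorem then furnishes the upper bound $\binom{n}{k}-\binom{n-k+1}{k}+\binom{n-k-r+1}{k-r}+r$ in the range $r\le k-1$, and $\binom{n}{k}-\binom{n-k+1}{k}+n-k+1$ in the range $k\le r\le n-k+1$. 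It therefore suffices to show that equality in either of these bounds is incompatible with $|\mathcal{F}\cap\mathcal{G}|\le r-1$.

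The key step is a case-by-case inspection of the extremal configurations listed in Theorem \ref{F24}, showing that in each one $\mathcal{F}\subseteq\mathcal{G}$ and hence $|\mathcal{F}\cap\mathcal{G}|=|\mathcal{F}|\ge r$. For the principal extremal of part (i), namely $\mathcal{F}=\{F_1,\dots,F_r\}$ with $F_i=[k-1]\cup\{k-1+i\}$ and $\mathcal{G}=\{G\in\binom{[n]}{k}:G\cap F_j\ne\emptyset\text{ for all }j\in[r]\}$, every $F_j$ satisfies $F_j\cap F_i\supseteq[k-1]\ne\emptyset$, so $F_j\in\mathcal{G}$. In the sporadic case $k=r+1$, the family $\mathcal{F}=\{[r]\cup\{i\}:r+1\le i\le n\}$ is contained in the star $\mathcal{G}=\{G:G\cap[r]\ne\emptyset\}$, giving $|\mathcal{F}\cap\mathcal{G}|=n-r\ge r$ because $n\ge 2k+1=2r+3$. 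The analogous verification works for the extremal of part (ii), where $\mathcal{F}=\{[k-1]\cup\{i\}:k\le i\le n\}\subseteq\mathcal{G}$ yields $|\mathcal{F}\cap\mathcal{G}|=n-k+1\ge r$ from the hypothesis $r\le n-k+1$. Finally, for the sporadic $k=3$ case $\mathcal{F}=\mathcal{G}=\{F:1\in F\}$, one simply notes that $|\mathcal{F}\cap\mathcal{G}|=\binom{n-1}{2}\ge r$, which holds in both ranges thanks to $n\ge 2k+1=7$ and $r\le n-k+1=n-2$.

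Once each extremal is ruled out, equality in the Theorem \ref{F24} bound is impossible under the hypothesis $|\mathcal{F}\cap\mathcal{G}|\le r-1$, and integrality gives the sharpened bounds claimed in (i) and (ii). I do not anticipate any serious obstacle in this argument: it is essentially bookkeeping on the extremal list supplied by Theorem \ref{F24}. The only mildly delicate point is the $k=3$ full-star case, where the comparison $\binom{n-1}{2}\ge r$ must be checked against the upper limit $r\le n-k+1=n-2$; but this reduces to the trivial inequality $\binom{n-1}{2}\ge n-2$, valid for all $n\ge 4$.
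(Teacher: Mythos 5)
Your proposal is correct and matches the paper's approach exactly: the paper derives this lemma precisely by observing that every equality configuration of Theorem \ref{F24} satisfies $\mathcal{F}\subseteq\mathcal{G}$ (up to isomorphism) and hence $|\mathcal{F}\cap\mathcal{G}|=|\mathcal{F}|\ge r$, so the hypothesis $|\mathcal{F}\cap\mathcal{G}|\le r-1$ forces strict inequality, and integrality yields the stated bounds. Your case-by-case verification of the extremal families, including the sporadic $k=r+1$ and $k=3$ cases, supplies the details the paper leaves implicit.
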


 For a family $\mathcal{F} \subseteq 2^{[n]}$ and $1\leq i\neq j\leq n$, we denote
\begin{align*}
&\mathcal{F}(i,j)=\left\{F\backslash \{i,j\}: i, j\in F \in \mathcal{F}\right\},\\
&\mathcal{F}(i,\bar{j})=\mathcal{F}(\bar{j},i)=\left\{F\backslash \{i\}: i \in F, j\notin F, F \in \mathcal{F}\right\},\\
&\mathcal{F}(\bar{i},\bar{j})=\left\{F: i, j\notin F \in \mathcal{F}\right\}.
\end{align*}
Next we prove Theorem \ref{main4} by employing Theorem \ref{main1} and Lemma \ref{main3}.\vspace{3mm}

\noindent{\bf Proof of Theorem \ref{main4}.}
It is well-known that 
applying the shifting operations preserves the intersecting property. 
So we consider the following two cases: 
(i) After applying all possible shifting operations, we arrive at a shifted intersecting family $\mathcal{F}^*$ such that $\gamma (\mathcal{F}^*)\ge r$; (ii) After applying some shifting operations, we get a family $\mathcal{G}$, while at some step $s_{i,j}$, the resulting family $s_{i,j}(\mathcal{G})$ violates the diversity condition, i.e., $\gamma (s_{i,j}(\mathcal{G}))\le r-1$.  

{\bf Case 1.} Suppose that  the shifting operations end up with a shifted intersecting family $\mathcal{F}^*$ satisfying $|\mathcal{F}^*|=|\mathcal{F}|$ and $\gamma(\mathcal{F}^*) \geq r$. 
Then we have $|\mathcal{F}^*(\bar{1})|\geq r\geq 1$. Since $\mathcal{F}^*$ is intersecting, Lemma \ref{S1} implies that $\mathcal{F}^*(\bar{1})$ is $2$-intersecting. Note that $\mathcal{F}^*(\bar{1}) \subseteq\binom{[n]\backslash \{1\}}{k}$ and $\mathcal{F}^*(1) \subseteq\binom{[n]\backslash \{1\}}{k-1}$ are  cross-intersecting. Thus, Theorem \ref{main1} could be applied. 

(i) For $r\le k-2$, the case $t=1$ in Theorem \ref{main1} yields 
\[  |\mathcal{F}^*|=|\mathcal{F}^*(\bar{1})|+|\mathcal{F}^*(1)| \le 
{n-1 \choose k-1} - {n-k \choose k-1} + 
{n-k-r \choose k-1-r} + r,
\]
where the equality holds if and only if $\mathcal{F}^*=\mathcal{J}_{k,r}$, or when $k=r+2$ and $\mathcal{F}^*=\mathcal{H}_{r+2}$, or when $k=4$ and $\mathcal{F}^*=\mathcal{G}_4$. Hence, we can verify that $\mathcal{F} $ is isomorphic to 
$\mathcal{J}_{k,r}$ or $\mathcal{H}_{r+2}$ or 
$\mathcal{G}_4$. 

(ii) For $k-1\le r \le n-k$, by Theorem \ref{main1} again, we get 
\[  |\mathcal{F}^*|=|\mathcal{F}^*(\bar{1})|+|\mathcal{F}^*(1)| \le 
{n-1 \choose k-1} - {n-k \choose k-1} + n-k,  \]
with equality if and only if $\mathcal{F}^*=\mathcal{H}_k$, or when $k=4$ and $\mathcal{F}^*=\mathcal{G}_4$. Similarly, we can check that $\mathcal{F}$ is isomorphic to $\mathcal{H}_k$ or $\mathcal{G}_4$. 

{\bf Case 2.} 
 Suppose that there is a family $\mathcal{G} \subseteq\binom{[n]}{k}$ obtained from $\mathcal{F}$ by repeatedly shifting operations such that $\gamma(\mathcal{G}) \geq r$, $|\mathcal{G}|=|\mathcal{F}|$, but after some shifting $s_{i,j}$, we have $\gamma(s_{i, j}(\mathcal{G})) \leq r-1$.\vspace{1mm}

{\bf Subcase 2.1.} Suppose that $\mathcal{G}(\bar{i},\bar{j})= \emptyset$.\vspace{1mm}

First note that $\gamma(\mathcal{G}) \geq r$ implies that $|\mathcal{G}(i,\bar{j})|=|\mathcal{G}(\bar{j})|\geq r$ and $|\mathcal{G}(\bar{i},j)|=|\mathcal{G}(\bar{i})|\geq r$.  
We see that $\mathcal{G}(i,\bar{j})$ and $\mathcal{G}(\bar{i},j)$ are cross-intersecting. From $\gamma(s_{i, j}(\mathcal{G})) \leq r-1$, we get  $|\mathcal{G}(\bar{i},j)\cap \mathcal{G}(i,\bar{j})|\leq r-1$. 
In this case,  Lemma \ref{main3} is applicable. 

(i) For $ r \leq k-2$, 
by Lemma \ref{main3}, we have
$$
|\mathcal{G}(i,\bar{j})|+|\mathcal{G}(\bar{i}, j)| \leq\binom{n-2}{k-1}-\binom{n-k}{k-1}+\binom{n-k-r}{k-r-1}+r-1. 
$$

(ii) For $k-1\leq r \leq n-k$, we get 
$$
|\mathcal{G}(i,\bar{j})|+|\mathcal{G}(\bar{i}, j)|  \leq\binom{n-2}{k-1}-\binom{n-k}{k-1}+n-k-1. 
$$
Combining the inequality $|\mathcal{G}(i,j)|\leq \binom{n-2}{k-2}$ and the fact $|\mathcal{F}|=|\mathcal{G}(i,j)|+|\mathcal{G}(i,\bar{j})|+|\mathcal{G}(\bar{i}, j)|$, we get that in both two cases, $|\mathcal{F}|$ is {\it strictly} smaller than the required upper bound.

{\bf Subcase 2.2.} Suppose that $\mathcal{G}(\bar{i},\bar{j})\neq \emptyset$.\vspace{1mm}

 We denote $|\mathcal{G}(\bar{i},\bar{j})|:=s\ge 1$. By $\gamma(s_{i, j}(\mathcal{G})) \leq r-1$, we have $ s \leq r-1$ and $|\mathcal{G}(\bar{i},j)\cap \mathcal{G}(i,\bar{j})|\leq r-1-s$. In addition, $\gamma(\mathcal{G}) \geq r$ implies that $|\mathcal{G}(i,\bar{j})|\geq r-s$ and $|\mathcal{G}(\bar{i},j)|\geq r-s$.

(i) For $r \leq k-2$, we have $1\leq r-s < k-2$. Recall that $\mathcal{G}(i,\bar{j})$ and $\mathcal{G}(\bar{i},j)$ are 
cross-intersecting. It follows from Lemma \ref{main3} that
\begin{align}\label{a1}
|\mathcal{G}(i,\bar{j})|+|\mathcal{G}(\bar{i}, j)| \leq\binom{n-2}{k-1}-\binom{n-k}{k-1}+\binom{n-k-r+s}{k-1-r+s}+r-s-1.
\end{align}
Note that $\mathcal{G}(i,j)$ and $\mathcal{G}(\bar{i},\bar{j})$ are cross-intersecting and $|\mathcal{G}(\bar{i},\bar{j})|=s\leq r-1<k-2$. Therefore, similar to (\ref{eq-explain}), we have
\begin{align}
|\mathcal{G}(i,j)| &\leq \binom{n-2}{k-2}-\sum_{j=0}^{s-1}\binom{n-2-k-j}{k-2-j} \notag \\
&=\binom{n-2}{k-2}-\binom{n-k-1}{k-2}+\binom{n-k-s-1}{k-s-2}.\label{a2}
\end{align}
Consequently, we obtain 
\begin{align*}
|\mathcal{F}|&=|\mathcal{G}(i,j)|+|\mathcal{G}(i,\bar{j})|+|\mathcal{G}(\bar{i}, j)|+|\mathcal{G}(\bar{i},\bar{j})|\\
&\leq \binom{n-1}{k-1}-\binom{n-k}{k-1}-\binom{n-k-1}{k-2}+\binom{n-k-s-1}{k-s-2}+\binom{n-k-r+s}{k-1-r+s}+r-1.
\end{align*}
Let $f(s):=\binom{n-k-s-1}{k-s-2}+\binom{n-k-r+s}{k-1-r+s}$, where $1\leq s \leq r-1$. It is easy to see that $\text{max}_{1\leq s \leq r-1} f(s)=\text{max}\{f(1), f(r-1)\}=f(r-1)=\binom{n-k-1}{k-2}+\binom{n-k-r}{k-r-1}$. So
$$
|\mathcal{F}| <\binom{n-1}{k-1}-\binom{n-k}{k-1}+\binom{n-k-r}{k-r-1}+r.
$$
So the desired upper bound in (i) holds strictly. 

(ii) For $k-1\leq r \leq n-k$,  
the arguments in this case are slightly complicated.

If $1\leq r-s \leq k-2$, equivalently, $r-k+2\leq s \leq r-1$, then (\ref{a1}) holds. 

If $k-1\leq r-s \leq r-1<n-k$, that is, $1\leq s \leq r-k+1$, then Lemma \ref{main3} gives 
\begin{equation}
    \label{eq-para-3-2}
    |\mathcal{G}(i,\bar{j})|+|\mathcal{G}(\bar{i}, j)| \leq\binom{n-2}{k-1}-\binom{n-k}{k-1}+n-k-1. 
\end{equation}

If $1\leq s\leq k-2$, then (\ref{a2}) holds. 

If $k-1\leq s\leq r-1$, then we similarly have 
\begin{equation}
    \label{eq-para-3-3}
    |\mathcal{G}(i,j)|\leq \binom{n-2}{k-2}-\sum_{j=0}^{k-2}\binom{n-2-k-j}{k-2-j}=\binom{n-2}{k-2}-\binom{n-k-1}{k-2}. 
\end{equation}

{ {\bf Case 2.2.1.}} Let us first consider the case $k-1\leq r-k+2$. We have  three subcases: \\ 
For $r-k+2\leq s \leq r-1$, we obtain from (\ref{a1}) and (\ref{eq-para-3-3}) that 
\begin{align} 
|\mathcal{F}|&\le  \binom{n-2}{k-1}-\binom{n-k}{k-1}+\binom{n-k-r+s}{k-1-r+s}+r-s-1 + 
\binom{n-2}{k-2}-\binom{n-k-1}{k-2}+s \notag \\
&= \binom{n-1}{k-1}-\binom{n-k}{k-1}-\binom{n-k-1}{k-2}+\binom{n-k-r+s}{k-1-r+s}+r-1 \notag \\
&\leq \binom{n-1}{k-1}-\binom{n-k}{k-1}+r-1 \notag \\ 
&<\binom{n-1}{k-1}-\binom{n-k}{k-1}+n-k. \label{a3}
\end{align}
For $k-2\leq s \leq r-k+1$, we obtain from 
(\ref{a2}) and (\ref{eq-para-3-3}) that 
$|\mathcal{G}(i,j)|\le {n-2 \choose k-2} - {n-k-1 \choose k-2} +1$. Combining with 
(\ref{eq-para-3-2}), we have 
\begin{align*}
|\mathcal{F}|&\leq\binom{n-2}{k-2}-\binom{n-k-1}{k-2}+1+\binom{n-2}{k-1}-\binom{n-k}{k-1}+n-k-1+s\\
&= \binom{n-1}{k-1}-\binom{n-k}{k-1}-\binom{n-k-1}{k-2}+n-k+s \\ 
& <\binom{n-1}{k-1}-\binom{n-k}{k-1}+n-k.
\end{align*}
For $1\leq s \leq k-3$, it follows from 
(\ref{a2}) and (\ref{eq-para-3-2}) that 
\begin{align}
|\mathcal{F}|&\leq \binom{n-2}{k-2}-\binom{n-k-1}{k-2}+\binom{n-k-s-1}{k-s-2}+\binom{n-2}{k-1}-\binom{n-k}{k-1}+n-k-1+s \notag \\
&= \binom{n-1}{k-1}-\binom{n-k}{k-1}-\binom{n-k-1}{k-2}+\binom{n-k-s-1}{k-s-2}+n-k-1+s \notag \\
&\leq \binom{n-1}{k-1}-\binom{n-k}{k-1}-\binom{n-k-1}{k-2}+\binom{n-k-2}{k-3}+n-k \notag \\
&<\binom{n-1}{k-1}-\binom{n-k}{k-1}+n-k. 
\label{a4} 
\end{align}

{ {\bf Case 2.2.2.}} Next we consider the case $k-1>r-k+2$. 
We also have three subcases: \\ 
For $k-1\leq s \leq r-1$, we can see that (\ref{a3}) holds similarly. \\ 
For $r-k+2\leq s \leq k-2$, we have
\begin{align*}
|\mathcal{F}|\leq&\binom{n-2}{k-2}-\binom{n-k-1}{k-2}+\binom{n-k-s-1}{k-s-2}+\binom{n-2}{k-1}-\binom{n-k}{k-1}\\
&+\binom{n-k-r+s}{k-1-r+s}+r-s-1+s\\
=& \binom{n-1}{k-1}-\binom{n-k}{k-1}-\binom{n-k-1}{k-2}+\binom{n-k-s-1}{k-s-2}+\binom{n-k-r+s}{k-1-r+s}+r-1,
\end{align*}
Recall that $f(s)=\binom{n-k-s-1}{k-s-2}+\binom{n-k-r+s}{k-1-r+s}$, where $r-k+2\leq s \leq k-2$. Since $k-1>r-k+2$, we get $r<2k-3$ and then $r-k+2\leq\frac{r}{2}\leq k-2$. Thus, we infer that $\text{max}_{r-k+2\leq s \leq k-2} f(s)=\text{max}\{f(r-k+2), f(k-2)\}=f(k-2)=\binom{n-r-2}{2k-r-3}+1$. As a consequence, we obtain 
\begin{align*}
|\mathcal{F}|\leq&\binom{n-1}{k-1}-\binom{n-k}{k-1}-\binom{n-k-1}{k-2}+\binom{n-r-2}{2k-r-3}+r\\
\leq&\binom{n-1}{k-1}-\binom{n-k}{k-1}-\binom{n-k-1}{k-2}+\binom{n-k-1}{k-2}+k-1\\
<&\binom{n-1}{k-1}-\binom{n-k}{k-1}+n-k.
\end{align*}
For $1\leq s \leq r-k+1$, it follows that (\ref{a4}) holds.

We conclude that in Case 2, $|\mathcal{F}|$ is strictly smaller than the required upper bound.  $\hfill \square$\vspace{3mm}

\section{Proof of Theorem \ref{main5}}

\label{sec-prove-thm1.5}

Recall that $\mathcal{F}_i := \{F\in \mathcal{F}:|F|=i\}$. 
In order to prove Theorem \ref{main5}, we need the following lemma, which was proved by Katona 
\cite{K64}; see \cite{Fra2017} for a self-contained and streamlined proof.

\begin{lemma}[Katona's inequality \cite{K64,Fra2017}] 
\label{51}
 Let $2 \leq s \leq n-2$ be an integer. Suppose that  $\mathcal{F} \subseteq 2^{[n]}$ is an $s$-union family. Then for all $1\leq i\leq \frac{s}{2}$, 
$$
|\mathcal{F}_i|+|\mathcal{F}_{s+1-i}| \leq \binom{n}{i},
$$
where the equality holds if and only if $\mathcal{F}_i=\binom{[n]}{i}$ and 
$\mathcal{F}_{s+1-i}=\emptyset$.
\end{lemma}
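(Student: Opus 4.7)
The plan is to reduce Lemma~\ref{51} to an application of Theorem~\ref{F16} by examining only the two distinguished levels $i$ and $s+1-i$. Two consequences of the $s$-union hypothesis at these levels are immediate: for any $A\in\mathcal{F}_i$ and $B\in\mathcal{F}_{s+1-i}$, the identity $|A\cap B|=|A|+|B|-|A\cup B|\ge i+(s+1-i)-s=1$ shows that $\mathcal{F}_i$ and $\mathcal{F}_{s+1-i}$ are cross-intersecting; and for $B,B'\in\mathcal{F}_{s+1-i}$ one gets $|B\cap B'|\ge 2(s+1-i)-s=s+2-2i\ge 2$ (the last inequality uses $i\le s/2$), so $\mathcal{F}_{s+1-i}$ is $(s+2-2i)$-intersecting. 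These are exactly the structural inputs Theorem~\ref{F16} is designed to exploit.

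If $\mathcal{F}_{s+1-i}=\emptyset$, then $|\mathcal{F}_i|\le\binom{n}{i}$ trivially, with equality if and only if $\mathcal{F}_i=\binom{[n]}{i}$, which is precisely the asserted extremal configuration. Otherwise, assuming $i\ge 2$, I would invoke Theorem~\ref{F16} with the parameter substitution $k=i$ and $t=s+1-2i$, placing $\mathcal{F}_{s+1-i}$ in the role of $\mathcal{F}$ and $\mathcal{F}_i$ in the role of $\mathcal{G}$. The condition $n\ge 2k+t=s+1$ is automatic from the hypothesis $n\ge s+2$, and the $(t+1)$-intersecting requirement on the larger family matches exactly the $(s+2-2i)$-intersecting property proved above. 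Theorem~\ref{F16} then delivers
\[
|\mathcal{F}_i|+|\mathcal{F}_{s+1-i}| \;\le\; \binom{n}{i}-\binom{n-s-1+i}{i}+1.
\]
Since $n\ge s+2$ forces $n-s-1+i\ge i+1$ and hence $\binom{n-s-1+i}{i}\ge i+1\ge 2$, the right-hand side is strictly less than $\binom{n}{i}$, so the stated bound holds and equality is impossible when $\mathcal{F}_{s+1-i}\neq\emptyset$.

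It remains to treat the boundary $i=1$, which is not covered by Theorem~\ref{F16} (that result requires $k\ge 2$). Here $\mathcal{F}_{s+1-i}=\mathcal{F}_s$ consists of $s$-sets, and the $s$-union property applied to any two of them forces $|F\cup F'|=s$, i.e.\ $F=F'$; hence $|\mathcal{F}_s|\le 1$. If $\mathcal{F}_s=\{B_0\}$, the cross-intersecting condition forces every singleton of $\mathcal{F}_1$ to lie in $B_0$, so $|\mathcal{F}_1|+|\mathcal{F}_s|\le s+1\le n-1<\binom{n}{1}$, yielding strict inequality. The only real obstacle in the argument is the bookkeeping needed to align the parameters of Theorem~\ref{F16} with the $(i,s+1-i)$ pair and to isolate the small case $i=1$; once that is done, the combinatorial content of the $s$-union hypothesis supplies everything else.
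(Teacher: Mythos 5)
Your proof is correct, and it takes a genuinely different route from the sources the paper cites for Lemma \ref{51}. The paper does not prove this lemma itself; it refers to Katona's original argument and Frankl's streamlined one, both of which are elementary shadow/counting proofs (for each $B\in\mathcal{F}_{s+1-i}$ the $i$-sets incompatible with $B$ are exactly those disjoint from $B$, and a Kruskal--Katona-type lower bound on the number of such forbidden $i$-sets gives the inequality). You instead extract from the $s$-union hypothesis that $\mathcal{F}_i$ and $\mathcal{F}_{s+1-i}$ are cross-intersecting and that $\mathcal{F}_{s+1-i}$ is $(s+2-2i)$-intersecting, and feed this into Theorem \ref{F16} with $k=i$, $t=s+1-2i$; the bookkeeping is right ($k+t=s+1-i$, $2k+t=s+1\le n-1$, $t+1=s+2-2i$), the case $i=1$ is correctly split off since Theorem \ref{F16} needs $k\ge 2$, and the resulting bound $\binom{n}{i}-\binom{n-s-1+i}{i}+1<\binom{n}{i}$ delivers the inequality and the equality characterization simultaneously. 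This is essentially the observation the paper itself records in the Remark closing Section \ref{sec-prove-thm1.5}, there phrased with Theorem \ref{main1} in place of Theorem \ref{F16}. The trade-off is clear: your derivation is short given the machinery already in the paper, and it even yields the stronger conclusion that the inequality is strict whenever $\mathcal{F}_{s+1-i}\neq\emptyset$; but it rests on the considerably deeper Theorem \ref{F16}, whereas the cited proofs are self-contained and elementary.
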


\noindent{\bf Proof of Theorem \ref{main5}.}
(i) We consider the case $s=2d$. Note that
$$
|\mathcal{F}| =|\mathcal{F}_0|+\sum_{1 \leq i \leq d} \Big(|\mathcal{F}_i|+|\mathcal{F}_{2d+1-i}|\Big).
$$
Since $\mathcal{F}_{d+1}\neq \emptyset$ and $\mathcal{F}$ is $2d$-union, 
we know that $\mathcal{F}_{d+1}$ is $2$-intersecting. Observe that $\mathcal{F}_{d}$ and $\mathcal{F}_{d+1}$ are cross-intersecting. 
So we can apply the case $t=1$ in Theorem \ref{main1}.

For $r\le d-1$, we get 
\[  |\mathcal{F}_d| + |\mathcal{F}_{d+1}| \le 
{n \choose d} - {n-d \choose d} + {n-d-r \choose d-r} +r. \]

For $d\le r \le n-d$, we have 
\[   |\mathcal{F}_d| + |\mathcal{F}_{d+1}| \le 
{n \choose d} - {n-d \choose d} + n-d. \]
Moreover, we know from 
Lemma \ref{51} that $|\mathcal{F}_i| + |\mathcal{F}_{2d+1-i}| \le {n \choose i}$ 
for every $ i\in [1, d-1]$. 
So we immediately obtain the desired upper bound. 

(ii) For the case $s=2d+1$, we can see that  
$$
|\mathcal{F}| =|\mathcal{F}_0|+\sum_{1 \leq i \leq d}\Big(|\mathcal{F}_i|+|\mathcal{F}_{2d+2-i}|\Big)+|\mathcal{F}_{d+1}|.
$$
Since $\mathcal{F}$ is $(2d+1)$-union,  $\mathcal{F}_{d+1}$ is intersecting. 
By Theorem \ref{main4}, we get that for $r\le d-1$, 
\[ |\mathcal{F}_{d+1}| \le {n-1 \choose d}  
 -{n-d-1 \choose d} + {n-d-r-1 \choose d-r} +r. \]
 For $d\le r\le n-d-1$, we have 
\[  |\mathcal{F}_{d+1}| \le {n-1 \choose d} - {n-d-1 \choose d} + n-d-1. \]
Thus, the desired upper bound follows from Lemma \ref{51}.
$\hfill \square$\vspace{3mm}

\noindent 
{\bf Remark.} 
Note that the bound in Lemma \ref{51} can be improved whenever $|\mathcal{F}_{s+1-i}|\ge 1$. 
Indeed, if $\mathcal{F}_{s+1-i}\neq\emptyset$,  then $\mathcal{F}_{s+1-i}$ is $(s+2-2i)$-intersecting, and we can apply Theorem \ref{main1} to $\mathcal{F}_i$ and $\mathcal{F}_{s+1-i}$. Thus, the bounds in Theorem \ref{main5} can be improved under certain conditions.

\section*{Acknowledgement} 
The authors would like to express their sincere thanks to the referee for the valuable suggestions which improved the presentation of the manuscript. Yongtao Li would also like to thank Biao Wu for the inspiring discussions. 
Lihua Feng was supported by 
the NSFC (Nos. 12271527 and 12471022). 
Yongtao Li was supported by the Postdoctoral Fellowship Program of CPSF (No. GZC20233196). 
This work was also partially supported by 
the NSF of Qinghai Province (No. 2025-ZJ-902T).  
This paper is equally contributed.


\begin{thebibliography}{99}
\bibitem{AS2016}
N. Alon, J.H. Spencer, 
The Probabilistic Method, Fourth edition, John Wiley \& Sons, Inc., Hoboken, NJ, 2016.

\bibitem{BF2022}
P. Borg, C. Feghali, 
The maximum sum of sizes of cross-intersecting families of subsets of a set, 
Discrete Math. 345 (2022), No. 112981.

\bibitem{CLLW2022}
M. Cao, M. Lu, B. Lv, K. Wang, 
Nearly extremal non-trivial cross $t$-intersecting families and $r$-wise $t$-intersecting families, 
European J. Combin. 120 (2024), No. 103958. 

\bibitem{Day1974b}
D.E. Daykin, Erd\H{o}s--Ko--Rado from Kruskal--Katona, 
J. Combin. Theory, Ser. A 17 (1974) 254--255.

\bibitem{E61}P. Erd\H{o}s, C. Ko, R. Rado, 
Intersection theorems for systems of finite sets, 
Q. J. Math. Oxford 2 (1961) 313--320.





\bibitem{F87}
P. Frankl, 
Erd\H{o}s-Ko-Rado theorem with conditions on the maximal degree, 
J. Combin. Theory, Ser. A 46 (1987) 252--263.

\bibitem{F16}
P. Frankl, 
New inequalities for cross-intersecting families, 
Mosc. J. Comb. Number Theory 6 (2016) 27--32.


\bibitem{Fra2017} 
P. Frankl, 
A stability result for the Katona theorem, 
J. Combin. Theory, Ser. B 122 (2017) 869--876.

\bibitem{Fra2020}
P. Frankl, 
Maximum degree and diversity in intersecting hypergraphs, 
J. Combin. Theory, Ser. B 144 (2020) 81–94

 \bibitem{Fra2024}
 P. Frankl, 
 On the maximum of the sum of the sizes of non-trivial cross-intersecting families, 
 Combinatorica 44  (2024) 15--35.  
 


\bibitem{FF2012} 
P. Frankl, Z. F\"{u}redi, 
A new short proof of the EKR theorem, 
J. Combin. Theory, Ser. A 119 (2012) 1388--1390. 




\bibitem{FK2017}
P. Frankl, A. Kupavskii, 
A size-sensitive inequality for cross-intersecting families, 
European J. Combin. 62 (2017) 263--271.

\bibitem{FK2017-CPC}
P. Frankl, A. Kupavskii, 
Uniform $s$-cross-intersecting families, 
Combin. Probab. Comput.  26 (2017) 517--524. 

\bibitem{FK2021}
P. Frankl, A. Kupavskii, Diversity, 
J. Combin. Theory, Ser. A 182 (2021), No. 105468

 \bibitem{FT92}
P. Frankl, N. Tokushige, 
Some best possible inequalities concerning cross-intersecting families, 
J. Combin. Theory, Ser. A 61 (1992) 87--97.

\bibitem{FT1998}
P. Frankl, N. Tokushige, 
Some inequalities concerning cross-intersecting families, Combin. Probab. Comput. 7 (1998) 247--260. 

\bibitem{FW2024-cover3}
P. Frankl, J. Wang, 
Intersecting families with covering number three, 
J. Combin. Theory, Ser. B 171 (2025) 96--139. 


\bibitem{FW2024}
P. Frankl, J. Wang, 
Improved bounds on the maximum diversity of intersecting families, 
European J. Combin. 118 (2024), No. 103885.



\bibitem{F86}Z. F{\"u}redi, J.R. Griggs, Families of finite sets with minimum shadows, Combinatorica 6 (1986) 355--363.


\bibitem{Fur2006}Z. F\"{u}redi, K.-W. Hwang, P.M. Weichsel, 
A proof and generalizations of the Erd\H{o}s–Ko–Rado theorem 
using the method of linearly independent polynomials, 
in: Topics in Discrete Mathematics, 
in: Algorithms Combin., vol. 26, Springer, Berlin,
2006, pp. 215--224.  



\bibitem{H76}A.J.W. Hilton, The Erd\H{o}s-Ko-Rado theorem with valency conditions, in: Unpublished Manuscript, 1976.

\bibitem{H67} A.J.W. Hilton, E.C. Milner, Some intersection theorems for systems of finite sets, 
Q. J. Math. 18 (1967) 369--384.

\bibitem{Huang2019}
H. Huang, 
Two extremal problems on intersecting families, 
European J. Combin. 76 (2019) 1--9. 


\bibitem{HZ2017}H. Huang, Y. Zhao, 
Degree versions of the Erd\H{o}s--Ko--Rado theorem 
and Erd\H{o}s hypergraph matching conjecture, 
J. Combin. Theory, Ser. A 150 (2017) 233--247. 

\bibitem{HP2025-jcta}
Y. Huang,  Y. Peng, 
Non-empty pairwise cross-intersecting families, 
J. Combin. Theory, Ser. A 211 (2025), No. 105981. 

\bibitem{H18}G. Hurlbert, V. Kamat, 
New injective proofs of the Erd\H{o}s--Ko--Rado and Hilton--Milner
theorems, Discrete Math. 341 (2018) 1749--1754.

\bibitem{K64}G.O.H. Katona, Intersection theorems for systems of finite sets, Acta Math. Hungar. 15 (1964) 329--337.

\bibitem{Kat1972}G.O.H. Katona, A simple proof of the Erd\H{o}s--Chao Ko--Rado theorem, 
J. Combin. Theory, Series B 13 (1972) 183--184.


\bibitem{K18}A. Kupavskii, Structure and properties of large intersecting families, (2018) arXiv:1810.00920.

\bibitem{Kua2018}
A. Kupavskii, 
Diversity of uniform intersecting families, 
European J. Combin. 74 (2018) 39--47. 

\bibitem{KZ2018} 
A. Kupavskii, D. Zakharov, 
Regular bipartite graphs and intersecting families, 
J. Combin. Theory, Ser. A 155 (2018) 180--189.

\bibitem{L24}
Y. Li, B. Wu, Stabilities for non-uniform $t$-intersecting families, 
Electron. J. Combin. 31 (4) (2024), \#P4.3.

\bibitem{Lov1979}L. Lov\'{a}sz, On the Shannon capacity of a graph,  
IEEE Trans. Inform. Theory 25 (1979)  1--7.  

\bibitem{M85}M. M{\"o}rs, A Generalization of a Theorem of Kruskal, Graphs and Combinatorics 1 (1985) 167--183.

\bibitem{SW2021}
A. Scott, E. Wilmer, 
Combinatorics in the exterior algebra and the Bollobás two families theorem, 
J. Lond. Math. Soc.  104 (2021) 1812--1839.

\bibitem{SFQ2022}
C. Shi,  P. Frankl, J. Qian, 
 On non-empty cross-intersecting families, 
 Combinatorica 42 (2022) 1513--1525.


\bibitem{WZ2013}
J. Wang, H. Zhang, 
Nontrivial independent sets of bipartite graphs and cross-intersecting families,
J. Combin. Theory, Ser. A 120 (2013) 129--141.


\bibitem{W23}B. Wu, A refined result on cross-intersecting families, Discrete Appl. Math. 339 (2023) 149--153.


\bibitem{YKXZG2022}
W. Yu, X. Kong, Y. Xi, X. Zhang, G. Ge, 
Bollobás-type theorems for hemi-bundled two families, 
European J. Combin. 100 (2022), No. 103438. 


\end{thebibliography}
\end{document}